\documentclass[12pt, reqno]{amsart}

\usepackage[T1]{fontenc}
\usepackage[utf8]{inputenc}
\usepackage{lmodern}
\usepackage{amsmath, amsthm, amscd, amsfonts, amssymb, graphicx, color}
\usepackage[bookmarksnumbered, colorlinks, plainpages]{hyperref}

\newtheorem{theorem}{Theorem}[section]

\newtheorem{proposition}[theorem]{Proposition}
\newtheorem{corollary}[theorem]{Corollary}

\theoremstyle{definition}

\newtheorem{example}[theorem]{Example}

\theoremstyle{remark}

\numberwithin{equation}{section}

\begin{document}
	
	\setcounter{page}{1}
	
	
	\title[An Invertible Hurwitz--Lerch Type Family]
	{An Invertible Family of Hurwitz--Lerch Type Functions Associated with
		\(\boldsymbol{k}\)-Augmented Centered Triangular Numbers}
	
	
	\author[N. Lacpao, R. Acope, M. Frias, M. Arcillas, R. Tiu, F. Romagos]
	{Noel B. Lacpao$^1$$^{*}$, Rushel S. Acope$^2$, Marlon S. Frias$^3$,
		Mark Ivan P. Arcillas$^4$, Rey Carl P. Tiu$^5$,
		\MakeLowercase{and} Francis Jay R. Romagos$^6$}
	
	\address{$^{1}$ Department of Mathematics, Bukidnon State University, Malaybalay City, Philippines.}
	\email{\textcolor[rgb]{0.00,0.00,0.84}{noel.lacpao@buksu.edu.ph}}
	
	\address{$^{2}$ Department of Mathematics, Bukidnon State University, Malaybalay City, Philippines.}
	\email{\textcolor[rgb]{0.00,0.00,0.84}{rushelacope16@gmail.com}}
	
	\address{$^{3}$ Department of Mathematics, Bukidnon State University, Malaybalay City, Philippines.}
	\email{\textcolor[rgb]{0.00,0.00,0.84}{marlonfrias@buksu.edu.ph}}
	
	\address{$^{4}$ Department of Mathematics, Bukidnon State University, Malaybalay City, Philippines.}
	\email{\textcolor[rgb]{0.00,0.00,0.84}{markivanarcillas@buksu.edu.ph}}
	
	\address{$^{5}$ Department of Mathematics, Bukidnon State University, Malaybalay City, Philippines.}
	\email{\textcolor[rgb]{0.00,0.00,0.84}{reycarl\_tiu@buksu.edu.ph}}
	
	\address{$^{6}$ Department of Mathematics, Bukidnon State University, Malaybalay City, Philippines.}
	\email{\textcolor[rgb]{0.00,0.00,0.84}{romagosfrancisjay@gmail.com}}
	
	
	\begin{abstract}
		This paper defines a family of Hurwitz--Lerch type functions whose
		coefficients are the \(k\)-augmented centered triangular numbers. For this
		family, we obtain the convergence conditions, a reduction formula, and an
		Euler-operator form. A Vandermonde-based inversion formula is derived for a
		class of polynomially weighted Hurwitz--Lerch functions. The family considered
		here is the quadratic case with geometric factors \(1\), \(2\), and \(4\).
		The resulting formulas show that three consecutive functions recover the
		classical Hurwitz--Lerch transcendent and its first two Euler derivatives.
		We also derive recurrence formulas, ordinary generating functions, finite
		sums, and special values. The values at \(z=1\) are expressed through
		Hurwitz zeta functions and Bernoulli polynomials. When \(a=1\), the numerator
		polynomials of the rational values \(H_k(z,-m,1)\) are written in terms of
		Eulerian polynomials, while the alternating values \(H_k(-1,-m,a)\) are
		expressed through Euler polynomials.
		
		\medskip
		
		\noindent
		\textit{Keywords.}
		Hurwitz--Lerch transcendent, \(k\)-augmented centered triangular numbers,
		Euler operator, Vandermonde inversion, Bernoulli polynomials, Eulerian
		polynomials, Euler polynomials.
		
		\smallskip
		
		\noindent
		\textit{2020 Mathematics Subject Classification.}
		Primary 11M35; Secondary 11B83, 05A15.
	\end{abstract}
	
	\maketitle
	
	\section{Introduction}
	
	The Hurwitz--Lerch transcendent is defined by
	\begin{equation}\label{eq:hurwitz-lerch}
		\Phi(z,s,a)	=\sum_{r=0}^{\infty}\frac{z^r}{(r+a)^s},
	\end{equation}
	where the parameters are chosen so that the series converges. It includes the
	Hurwitz zeta function and the polylogarithm through
	\[
	\Phi(1,s,a)=\zeta(s,a)
	\]
	and
	\[
	\Phi(z,s,1)
	=
	\frac{\operatorname{Li}_s(z)}{z},
	\qquad z\neq0.
	\]
	These relations and the standard analytic properties of the function are
	discussed in \cite{NIST2026,SrivastavaChoi2012}. The early development of the
	Hurwitz and Lerch zeta functions may be traced through
	\cite{Apostol1951,Hurwitz1882,Lerch1887}. For a detailed treatment of the Lerch
	zeta function, see \cite{LaurincikasGarunkstis2002}.
	
	Several extensions of the Hurwitz--Lerch transcendent have been obtained by
	introducing extra parameters, coefficient factors, or auxiliary kernels.
	Choi and Parmar \cite{ChoiParmar2017} derived integral representations,
	summation formulas, and Mellin--Barnes type contour representations for a
	two-variable extension. Nisar \cite{Nisar2019} studied a further extension and
	obtained integral representations, a summation formula, and relations with
	generalized hypergeometric functions.
	
	Integral formulas, generating relations, and derivative formulas for related
	Hurwitz--Lerch type functions were obtained in
	\cite{ChoiSahinYagciKim2020,NadeemUsmanNisarBaleanu2020}. Recurrence relations
	for a two-variable extension were also derived in
	\cite{ChoiSahinYagciKim2020}. Geometric properties involving differential
	subordination, univalence, and convexity were studied in
	\cite{Abdulnabi2024}. General families of Hurwitz--Lerch functions and their
	representations are discussed in \cite{Srivastava2019}.
	
	Inversion formulas have also appeared in the literature. Nakamura
	\cite{Nakamura2010} obtained inversion formulas from multiplication relations
	for Hurwitz--Lerch zeta functions. Bayad and Chikhi
	\cite{BayadChikhi2013} studied reduction and duality formulas for generalized
	Hurwitz--Lerch zeta functions. The inversion considered here has a different
	source because it acts on an augmentation parameter and separates distinct geometric
	components in that parameter.
	
	The coefficient sequence used in this paper is formed from the
	\(k\)-augmented centered triangular numbers introduced in
	\cite{Lacpao2026}. Let \(A(n,k)\) denote the \(n\)-th number in this family.
	We define
	\begin{equation}
		H_k(z,s,a)
		=
		\sum_{r=0}^{\infty}
		\frac{A(r+1,k)z^r}{(r+a)^s},
		\label{hk-definition}
	\end{equation}
	where \(k\in\mathbb{N}_0\),
	\(a\notin\{0,-1,-2,\ldots\}\), and \(z,s\in\mathbb{C}\), subject to the
	convergence conditions established below. The closed form of \(A(r+1,k)\) is quadratic in \(r\)
	and its dependence on \(k\) is determined by the three geometric sequences \(1^k\), \(2^k\), and \(4^k\). As a result, \(H_k(z,s,a)\) is a linear combination of \(\Phi(z,s,a)\), \(D\Phi(z,s,a)\), and \(D^2\Phi(z,s,a)\) where \(D=z\frac{d}{dz}\).	Therefore, the three consecutive functions \(H_k(z,s,a)\), \(H_{k+1}(z,s,a)\), and
	\(H_{k+2}(z,s,a)\) determine the Hurwitz--Lerch transcendent and its first two Euler derivatives.
	
	The inversion is obtained as the quadratic case of a Vandermonde formula for
	polynomially weighted Hurwitz--Lerch functions. For the family considered here,
	the geometric factors are \(1\), \(2\), and \(4\).
		
	\section{Preliminaries}
	
	Throughout the paper, let \(k\in\mathbb{N}_0\),
	\(a\notin\{0,-1,-2,\ldots\}\), and \(z,s\in\mathbb{C}\), unless stated
	otherwise. For complex powers, we use the principal value
	\[
	(r+a)^{-s}
	=
	\exp\{-s\operatorname{Log}(r+a)\},
	\]
	where
	\[
	\operatorname{Log}w
	=
	\log|w|+i\operatorname{Arg}w,
	\ \text{such that} \ 
	-\pi<\operatorname{Arg}w\leq\pi.
	\]
	
	The Hurwitz--Lerch transcendent is defined by
	\eqref{eq:hurwitz-lerch}. For \(|z|<1\), its defining series converges
	absolutely for every \(s\in\mathbb{C}\). When \(z=1\), it reduces to the
	Hurwitz zeta function, \(\Phi(1,s,a)=\zeta(s,a)\), whose defining series converges absolutely for	\(\operatorname{Re}(s)>1\). When \(a=1\) and \(z\neq 0\),
	\begin{equation*}
		\Phi(z,s,1)
		=
		\frac{\operatorname{Li}_s(z)}{z}.
	\end{equation*}
	These convergence properties and specializations are standard
	\cite{NIST2026,SrivastavaChoi2012}.
	
	The Hurwitz--Lerch transcendent has an analytic continuation beyond its
	initial domain of convergence. Detailed accounts of its continuation and
	functional properties are given in \cite{LagariasLi2012,LaurincikasGarunkstis2002}.
	Let
	\begin{equation}\label{eq:EulerOperator}
		D=z\frac{d}{dz}
	\end{equation}
	denote the Euler operator. In a region where termwise differentiation is
	valid,
	\begin{equation}\label{euler}
		D\Phi(z,s,a)
		=
		\sum_{r=0}^{\infty}
		\frac{rz^r}{(r+a)^s}.
	\end{equation}
	Since \(r=(r+a)-a\), we obtain
	\begin{equation}
		D\Phi(z,s,a)
		=
		\Phi(z,s-1,a)-a\Phi(z,s,a).
		\label{eq:euler-first}
	\end{equation}
	Applying \(D\) once more and writing \(r^2=(r+a)^2-2a(r+a)+a^2\), we get
	\begin{align}
		D^2\Phi(z,s,a)
		&=\sum_{r=0}^{\infty}\frac{r^2z^r}{(r+a)^s}\label{euler2}\\
		&=\Phi(z,s-2,a)-2a\Phi(z,s-1,a)+a^2\Phi(z,s,a)\label{eq:euler-second}.
	\end{align}
	The identities \eqref{eq:euler-first} and \eqref{eq:euler-second} are first
	valid in a common domain of absolute convergence. They extend to other
	permitted values of the parameters by analytic continuation.
	
	For \(n\geq1\) and \(k\in\mathbb{N}_0\), the \(k\)-augmented centered triangular numbers have the closed form
	\begin{equation}
		A(n,k)
		=
		\frac{3}{2}4^k(n-1)^2
		+
		\frac{3}{2}2^k(n-1)
		+
		1.
		\label{eq:ank}
	\end{equation}
	This formula follows from the arithmetic structure of the
	\(k\)-augmented centered triangular array
	\cite{Lacpao2026}. Taking \(n=r+1\) in \eqref{eq:ank} while setting \(\alpha_k=\frac{3}{2}4^k\) and \(\beta_k=\frac{3}{2}2^k\), we obtain
	\begin{equation}
		A(r+1,k)
		=
		\alpha_k r^2+\beta_k r+1.
		\label{eq:ark}
	\end{equation}
		
	The Hurwitz zeta function at nonpositive integers is related to the Bernoulli
	polynomials by
	\begin{equation}
		\zeta(-m,a)
		=
		-\frac{B_{m+1}(a)}{m+1},
		\qquad
		m\in\mathbb{N}_0.
		\label{eq:zeta-bernoulli}
	\end{equation}
	This identity will be used to evaluate \(H_k(1,-m,a)\)
	\cite{NIST2026,SrivastavaChoi2012}.
	
	\section{The Weighted Hurwitz--Lerch Function}
	
	We begin with the convergence of the family defined in \eqref{hk-definition}.
	
	\begin{theorem}
		\label{thm:convergence}
		Let \(k\in\mathbb{N}_0\) and
		\(a\notin\{0,-1,-2,\ldots\}\). The defining series for \(H_k(z,s,a)\) converges absolutely for every \(s\in\mathbb{C}\) when \(|z|<1\). If \(z=1\), then the series converges if and only if \(\operatorname{Re}(s)>3\) and the convergence is absolute.	If \(|z|=1\) and \(z\neq1\), then the series converges if and only if \(\operatorname{Re}(s)>2\). In this case, the convergence is absolute for \(\operatorname{Re}(s)>3\) and conditional for \(2<\operatorname{Re}(s)\leq3\).
	\end{theorem}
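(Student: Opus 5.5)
We reduce everything to the size of the general term
\[
t_r=\frac{A(r+1,k)z^r}{(r+a)^s},\qquad A(r+1,k)=\alpha_k r^2+\beta_k r+1
\]
from \eqref{eq:ark}. Since \(\alpha_k>0\), we have \(A(r+1,k)\sim\alpha_k r^2\). Writing \(\sigma=\operatorname{Re}(s)\), the principal-value definition gives \(|(r+a)^{-s}|=|r+a|^{-\sigma}e^{t\operatorname{Arg}(r+a)}\) with \(t=\operatorname{Im}(s)\). Because \(\operatorname{Arg}(r+a)\to0\) as \(r\to\infty\), we get \(|(r+a)^{-s}|\asymp r^{-\sigma}\) for large \(r\). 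Hence \(|t_r|\asymp r^{2-\sigma}|z|^r\), with constants depending only on \(k,a,s\).

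\emph{Case \(|z|<1\).} Here \(r^{2-\sigma}|z|^r\) is summable for every \(\sigma\), by the ratio or root test. This gives absolute convergence for all \(s\).

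\emph{Case \(|z|=1\).} Now \(|t_r|\asymp r^{2-\sigma}\), so the series converges absolutely if and only if \(\sigma>3\). This covers absolute convergence in both subcases. If \(\sigma\le2\), then \(|t_r|\not\to0\), so the series diverges for every \(|z|=1\).

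\emph{Subcase \(z=1\), \(2<\sigma\le3\).} We must show divergence. The terms are \(A(r+1,k)(r+a)^{-s}\). Expanding \(A(r+1,k)=\alpha_k(r+a)^2+O(r+a)\), we get
\[
t_r=\alpha_k(r+a)^{2-s}+O(r^{1-\sigma}).
\]
The error series converges absolutely since \(\sigma>2\). It therefore suffices to show that \(\sum(r+a)^{2-s}\) diverges when \(\operatorname{Re}(2-s)\ge-1\).
\begin{itemize}
\item If \(\sigma<3\) or \(s=3\), the real parts of the terms eventually have constant sign and are not summable, or one compares with the harmonic series.
\item If \(\sigma=3\) and \(t\neq0\), we compare \((r+a)^{-1-it}\) with \(\int_r^{r+1}(x+a)^{-1-it}\,dx\). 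The difference is \(O(r^{-2})\), and the integral partial sums equal \(\frac{(N+a)^{-it}-a^{-it}}{-it}\), which oscillates without a limit.
\end{itemize}
This is the step I expect to be the most delicate, mainly because of the purely imaginary shift on the line \(\sigma=3\).

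\emph{Subcase \(|z|=1\), \(z\ne1\), \(2<\sigma\le3\).} We prove conditional convergence by Dirichlet's test (Abel summation). The partial sums \(\sum_{r\le N}z^r=\frac{1-z^{N+1}}{1-z}\) are bounded. The weight
\[
c_r=A(r+1,k)(r+a)^{-s}
\]
must have bounded variation and tend to \(0\). It tends to \(0\) because \(|c_r|\asymp r^{2-\sigma}\to0\). For the variation, view \(c\) as a smooth function \(c(x)\) for large \(x\). Its derivative satisfies \(c'(x)=O(x^{1-\sigma})\), since \(\frac{d}{dx}(x+a)^{-s}=-s(x+a)^{-s-1}\) is combined with a quadratic and its derivative. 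Hence \(|c_{r+1}-c_r|=O(r^{1-\sigma})\), which is summable because \(\sigma>2\). Abel summation then gives convergence. The convergence is not absolute, since \(\sigma\le3\) makes \(\sum|t_r|\) diverge.

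Together with the divergence for \(\sigma\le2\), this yields the stated equivalences.
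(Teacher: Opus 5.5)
Your overall route is the paper's. You reduce to $|t_r|\asymp r^{2-\sigma}|z|^r$ and get $|z|<1$ and absolute convergence directly. For $z\neq1$ you use Abel summation with $c'(x)=O(x^{1-\sigma})$, exactly as the paper does.

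The one place you diverge from the paper is the divergence at $z=1$, $2<\sigma\le3$. The paper simply cites the standard fact that $\sum r^{-(s-2)}$ converges only for $\operatorname{Re}(s-2)>1$. You try to prove this by hand, and your first bullet fails when $2<\sigma<3$ and $t=\operatorname{Im}(s)\neq0$:
\begin{itemize}
\item The argument of $(r+a)^{2-s}$ is $-t\log|r+a|+(2-\sigma)\operatorname{Arg}(r+a)$. Its first term tends to $\mp\infty$ with increments tending to $0$, so it winds around the circle infinitely often. The real parts therefore change sign infinitely often and never become eventually of constant sign.
\item Comparison with the harmonic series only controls $\sum|(r+a)^{2-s}|$. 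That shows the series is not absolutely convergent, not that it diverges.
\end{itemize}
So the case $2<\sigma<3$, $s\notin\mathbb{R}$ of the ``only if'' at $z=1$ is left unproved. The sign argument is valid only for real $s$ (and for $s=3$).

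The repair is to use your second-bullet integral comparison for every $s\neq3$ with $2<\sigma\le3$. The difference $(r+a)^{2-s}-\int_r^{r+1}(x+a)^{2-s}\,dx$ is $O(r^{1-\sigma})$, which is summable because $\sigma>2$. Meanwhile
$\int_{r_0}^{N}(x+a)^{2-s}\,dx=\frac{(N+a)^{3-s}-(r_0+a)^{3-s}}{3-s}$
has modulus $\asymp N^{3-\sigma}\to\infty$ when $\sigma<3$, and oscillates without a limit when $\sigma=3$. Either way the partial sums of $\sum(r+a)^{2-s}$ do not converge.

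One small correction in that bullet as well: start the integral at some $r_0$ with $\operatorname{Re}(r_0+a)>0$, not at $0$. For a negative non-integral real $a$, the integrand $(x+a)^{-1-it}$ has a non-integrable singularity at $x=-a>0$, so your expression $\frac{(N+a)^{-it}-a^{-it}}{-it}$ is not justified.
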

	
	\begin{proof}
		We write \(\sigma=\operatorname{Re}(s)\). By \eqref{eq:ark}, we have
		\[
		A(r+1,k)
		=
		\alpha_k r^2
		\left(
		1+O\left(\frac1r\right)
		\right).
		\]
		For fixed \(a\) and \(s\),
		\begin{align*}
			(r+a)^{-s}
			&=
			r^{-s}
			\left(
			1+\frac{a}{r}
			\right)^{-s}\\
			&=
			r^{-s}
			\left(
			1+O\left(\frac1r\right)
			\right)
		\end{align*}
		as \(r\to\infty\). Therefore,
		\begin{equation}
			\frac{A(r+1,k)}{(r+a)^s}
			=
			\alpha_k r^{2-s}
			\left(
			1+O\left(\frac1r\right)
			\right).
			\label{eq:hk-term-asymptotic}
		\end{equation}
		In particular,
		\[
		\left|
		\frac{A(r+1,k)}{(r+a)^s}
		\right|
		\asymp
		r^{2-\sigma}.
		\]
		
		When \(|z|<1\), the factor \(|z|^r\) gives exponential decay. Hence, the
		series converges absolutely for every \(s\in\mathbb{C}\). Suppose that \(|z|=1\). Absolute convergence is determined by \(\displaystyle\sum_{r=1}^{\infty}r^{2-\sigma}\)
		which converges if and only if \(\sigma>3\). Now let \(z=1\). If \(\sigma\leq2\), then the general term does not
		converge to zero. If \(2<\sigma\leq3\), then \eqref{eq:hk-term-asymptotic} gives
		\begin{equation*}
			\frac{A(r+1,k)}{(r+a)^s}
		=
		\alpha_k r^{2-s}
		+
		O\left(r^{1-\sigma}\right).
		\end{equation*}
		The series formed by the error terms converges absolutely because
		\(\sigma>2\). On the other hand, the Dirichlet series \(\displaystyle\sum_{r=1}^{\infty}r^{-(s-2)}\)
		converges if and only if \( \operatorname{Re}(s-2)>1\), and therefore diverges when \(\sigma\leq3\). It follows that the series
		for \(z=1\) converges if and only if \(\sigma>3\).
		
		Finally, suppose that \(|z|=1\) and \(z\neq1\). Set
		\[
		b_r
		=
		\frac{A(r+1,k)}{(r+a)^s}.
		\]
		Assume first that \(\sigma>2\). Then
		\[
		b_r
		=
		O\left(r^{2-\sigma}\right),
		\]
		so \(b_r\to0\). Choose \(R>0\) sufficiently large so that \(\operatorname{Re}(x+a)>0\)
		for every \(x\geq R\), and define
		\[
		f(x)
		=
		\frac{\alpha_kx^2+\beta_kx+1}{(x+a)^s},
		\ x\geq R.
		\]
		Then
		\[
		f'(x)
		=
		(2\alpha_kx+\beta_k)(x+a)^{-s}
		-
		s(\alpha_kx^2+\beta_kx+1)(x+a)^{-s-1}.
		\]
		As \(x\to\infty\),
		\[
		|(x+a)^{-s}|
		=
		O\left(x^{-\sigma}\right)
		\]
		and
		\[
		|(x+a)^{-s-1}|
		=
		O\left(x^{-\sigma-1}\right).
		\]
		Hence,
		\[
		f'(x)
		=
		O\left(x^{1-\sigma}\right).
		\]
		For every sufficiently large integer \(r\),
		\[
		b_{r+1}-b_r
		=
		f(r+1)-f(r)
		=
		\int_r^{r+1}f'(x)\,dx,
		\]
		so
		\[
		b_{r+1}-b_r
		=
		O\left(r^{1-\sigma}\right).
		\]
		Since \(\sigma>2\),
		\[
		\sum_{r=0}^{\infty}
		|b_{r+1}-b_r|
		\]
		converges. The partial sums
		\[
		\sum_{r=0}^{N}z^r
		=
		\frac{1-z^{N+1}}{1-z}
		\]
		are bounded because \(z\neq1\). Hence, summation by parts implies that \(\sum_{r=0}^{\infty}b_rz^r\) converges.
		
		If \(\sigma\leq2\), then \(|b_r|\asymp r^{2-\sigma},\) so the general term \(b_rz^r\) does not converge to zero. Therefore, the
		series converges for \(|z|=1\), \(z\neq1\), if and only if
		\(\sigma>2\). The convergence is absolute for \(\sigma>3\) and
		conditional when \(2<\sigma\leq3.\)
	\end{proof}
	\begin{theorem}
		\label{thm:reduction}
		The Hurwitz--Lerch type function associated with the
		\(k\)-augmented centered triangular numbers satisfies
		\begin{align}
			H_k(z,s,a)
			&=
			\alpha_k\Phi(z,s-2,a)
			+
			(\beta_k-2a\alpha_k)\Phi(z,s-1,a)
			\notag\\
			&\quad+
			(1-a\beta_k+a^2\alpha_k)\Phi(z,s,a).
			\label{eq:hk-reduction-main}
		\end{align}
	\end{theorem}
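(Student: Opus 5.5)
The identity will first be proved termwise in a common region of absolute convergence, and then extended by analytic continuation, in the same way as \eqref{eq:euler-first} and \eqref{eq:euler-second}. The key algebraic step is to rewrite the quadratic coefficient \eqref{eq:ark} as a polynomial in \(r+a\) rather than in \(r\). Substituting \(r=(r+a)-a\) gives
\begin{align*}
\alpha_k r^2+\beta_k r+1
&=\alpha_k(r+a)^2+(\beta_k-2a\alpha_k)(r+a)\\
&\quad+(1-a\beta_k+a^2\alpha_k).
\end{align*}
To verify this, expand \(\alpha_k((r+a)-a)^2+\beta_k((r+a)-a)+1\) and collect powers of \(r+a\). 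This is the only computation needed, and it is routine.

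Next, fix \(z\) and \(s\) with either \(|z|<1\) and \(s\) arbitrary, or \(|z|=1\) and \(\operatorname{Re}(s)>3\). By Theorem~\ref{thm:convergence}, the series for \(H_k(z,s,a)\) then converges absolutely. In the same region, the three series \(\Phi(z,s-2,a)\), \(\Phi(z,s-1,a)\) and \(\Phi(z,s,a)\) also converge absolutely, since \(\operatorname{Re}(s-2)>1\) there. Divide the displayed identity by \((r+a)^s\), using \((r+a)^{2}(r+a)^{-s}=(r+a)^{-(s-2)}\). This exponent rule holds for the principal branch because \(2\) is an integer, so \(\exp\{-(s-2)\operatorname{Log}(r+a)\}=(r+a)^2\exp\{-s\operatorname{Log}(r+a)\}\), and similarly for the exponent \(1\). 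Multiply by \(z^r\) and sum over \(r\ge 0\). Since each series converges absolutely, the sum splits into three separate sums, which gives \eqref{eq:hk-reduction-main} in this region.

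Finally, extend \eqref{eq:hk-reduction-main} to the remaining admissible parameters. There the left side is interpreted as the analytic continuation of \(H_k\), or equivalently \eqref{eq:hk-reduction-main} serves to define it. Both sides are analytic in \(s\), and also in \(z\) for \(|z|<1\), so the identity persists by the identity theorem. Alternatively, one can get the same result by combining \eqref{eq:euler-first} and \eqref{eq:euler-second} with \(H_k=\alpha_kD^2\Phi+\beta_kD\Phi+\Phi\), which follows from \eqref{euler} and \eqref{euler2}. The main point needing care is not the algebra but the justification of the split and of the extension. Specifically, I would state explicitly that in the conditionally convergent range \(2<\operatorname{Re}(s)\le 3\) with \(|z|=1\), \(z\ne1\), the individual \(\Phi\) series still converge by Dirichlet's test. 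Hence splitting a convergent sum into finitely many convergent sums remains valid there without any analytic continuation.
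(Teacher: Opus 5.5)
Your proof is correct and is essentially the paper's argument: the paper writes \(H_k=\alpha_kD^2\Phi+\beta_kD\Phi+\Phi\) and then substitutes \eqref{eq:euler-first} and \eqref{eq:euler-second}, which come from the same substitution \(r=(r+a)-a\) that you apply directly to the quadratic weight, so your stated alternative is exactly the paper's route. Your added justifications go beyond the paper's general appeal to analytic continuation: the exponent rule for the principal branch, and the observation that for \(|z|=1\), \(z\ne1\), \(2<\operatorname{Re}(s)\le3\) the three \(\Phi\)-series still converge (by the bounded-variation form of Dirichlet's test).
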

		
	\begin{proof}
		By \eqref{eq:ark},
		\begin{equation*}
			H_k(z,s,a)=\alpha_k\sum_{r=0}^{\infty}\frac{r^2z^r}{(r+a)^s}+\beta_k\sum_{r=0}^{\infty}
			\frac{rz^r}{(r+a)^s}+\Phi(z,s,a).
		\end{equation*}
		Using \eqref{eq:euler-first} and \eqref{eq:euler-second}, we obtain
		\begin{align*}
			H_k(z,s,a)
			&=
			\alpha_k
			\bigl[
			\Phi(z,s-2,a)
			-
			2a\Phi(z,s-1,a)
			+
			a^2\Phi(z,s,a)
			\bigr]
			\\
			&\quad+
			\beta_k
			\bigl[
			\Phi(z,s-1,a)
			-
			a\Phi(z,s,a)
			\bigr]
			+
			\Phi(z,s,a).
		\end{align*}
		Collecting the coefficients of the three Hurwitz--Lerch functions gives
		\eqref{eq:hk-reduction-main}.
	\end{proof}
	
	Formula \eqref{eq:hk-reduction-main} expresses \(H_k(z,s,a)\) as a linear
	combination of \(\Phi(z,s-2,a)\), \(\Phi(z,s-1,a)\), and \(\Phi(z,s,a)\). The dependence on \(k\) occurs through \(\alpha_k\) and \(\beta_k\).
	
	\section{Examples and Initial Cases}
	
	For \(k=0,1,2\), the coefficients \(\alpha_k\) and \(\beta_k\) are
	\begin{equation*}
		(\alpha_0,\beta_0)
		=
		\left(\frac{3}{2},\frac{3}{2}\right),
		\
		(\alpha_1,\beta_1)
		=
		(6,3),
		\ \text{and} \
		(\alpha_2,\beta_2)
		=
		(24,6).
	\end{equation*}
	Hence, we have
	\begin{enumerate}
		\item[(i)] \(A(r+1,0)=\frac{3}{2}r^2+\frac{3}{2}r+1,\)
		\item[(ii)] \(A(r+1,1)=6r^2+3r+1,\) and 
		\item[(iii)] \(A(r+1,2)=24r^2+6r+1.\)
	\end{enumerate}
	By Theorem \ref{thm:reduction}, we obtain
	\begin{align*}
		H_0(z,s,a)&=\frac{3}{2}\Phi(z,s-2,a)+\left(\frac{3}{2}-3a\right)\Phi(z,s-1,a)+
		\left(1-\frac{3}{2}a+\frac{3}{2}a^2\right)\Phi(z,s,a),\\
		H_1(z,s,a)&=6\Phi(z,s-2,a)+(3-12a)\Phi(z,s-1,a)	+(1-3a+6a^2)\Phi(z,s,a),\ \text{and}\\
		H_2(z,s,a)&=24\Phi(z,s-2,a)+(6-48a)\Phi(z,s-1,a)+(1-6a+24a^2)\Phi(z,s,a).
	\end{align*}
		
	In terms of the Euler operator, these formulas become
	\[
	H_0(z,s,a)
	=
	\left(
	\frac{3}{2}D^2+\frac{3}{2}D+1
	\right)\Phi(z,s,a),
	\]
	\[
	H_1(z,s,a)
	=
	\left(
	6D^2+3D+1
	\right)\Phi(z,s,a),
	\]
	and
	\[
	H_2(z,s,a)
	=
	\left(
	24D^2+6D+1
	\right)\Phi(z,s,a).
	\]
	
	Solving the resulting linear system gives
	\[
	\Phi(z,s,a)
	=
	\frac{8}{3}H_0(z,s,a)
	-
	2H_1(z,s,a)
	+
	\frac{1}{3}H_2(z,s,a),
	\]
	\[
	D\Phi(z,s,a)
	=
	\frac{
		-4H_0(z,s,a)
		+
		5H_1(z,s,a)
		-
		H_2(z,s,a)
	}{3},
	\]
	and
	\[
	D^2\Phi(z,s,a)
	=
	\frac{
		2H_0(z,s,a)
		-
		3H_1(z,s,a)
		+
		H_2(z,s,a)
	}{9}.
	\]
	These identities are the case \(k=0\) of
	Corollary~\ref{cor:hk-vandermonde-inversion}.
	
	\section{Operator Representation and Inversion Formulas}
	
	The family \(H_k(z,s,a)\) is the quadratic case associated with the geometric
	factors \(1\), \(2\), and \(4\).
	
	The identities are first proved in a common domain of absolute convergence.
	They may be extended by analytic continuation whenever the functions involved
	are defined.
	
	\begin{proposition}
		\label{prop:operator-form}
		The Hurwitz--Lerch type function weighted by the
		\(k\)-augmented centered triangular numbers satisfies
		\begin{equation}
			H_k(z,s,a)
			=
			\left(
			\alpha_kD^2+\beta_kD+1
			\right)\Phi(z,s,a).
			\label{eq:operator-form}
		\end{equation}
	\end{proposition}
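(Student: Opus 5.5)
The plan is to prove \eqref{eq:operator-form} first in the region \(|z|<1\), where everything converges absolutely, and then extend by analytic continuation as announced at the start of this section. Fix \(s\in\mathbb{C}\) and \(a\notin\{0,-1,-2,\ldots\}\). For \(|z|<1\) the series \eqref{eq:hurwitz-lerch} is a power series in \(z\) with coefficients \((r+a)^{-s}\), which grow at most polynomially in \(r\). Its radius of convergence is therefore at least \(1\), so it may be differentiated termwise on \(|z|<1\). Applying \(D=z\frac{d}{dz}\) to \(z^r\) gives \(rz^r\). Hence \eqref{euler} and \eqref{euler2} hold there, that is,
\[
D^j\Phi(z,s,a)=\sum_{r=0}^{\infty}\frac{r^jz^r}{(r+a)^s},\qquad j=0,1,2,
\]
with all three series absolutely convergent by the same argument as in Theorem~\ref{thm:convergence}.

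Next I would substitute the quadratic form \eqref{eq:ark}, \(A(r+1,k)=\alpha_kr^2+\beta_kr+1\), into the definition \eqref{hk-definition}. Because the three series above converge absolutely, the sum splits linearly:
\[
H_k(z,s,a)=\alpha_k\sum_{r=0}^{\infty}\frac{r^2z^r}{(r+a)^s}+\beta_k\sum_{r=0}^{\infty}\frac{rz^r}{(r+a)^s}+\sum_{r=0}^{\infty}\frac{z^r}{(r+a)^s}.
\]
Term by term, this is \(\alpha_kD^2\Phi+\beta_kD\Phi+\Phi\). This proves \eqref{eq:operator-form} for \(|z|<1\).

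Alternatively, one can combine Theorem~\ref{thm:reduction} with \eqref{eq:euler-first} and \eqref{eq:euler-second}. The coefficients of \(\Phi(z,s-2,a)\), \(\Phi(z,s-1,a)\) and \(\Phi(z,s,a)\) in \(\alpha_kD^2\Phi+\beta_kD\Phi+\Phi\) are \(\alpha_k\), \(\beta_k-2a\alpha_k\) and \(1-a\beta_k+a^2\alpha_k\). These match \eqref{eq:hk-reduction-main} exactly. This route is useful because it expresses both sides through \(\Phi\) at shifted \(s\), which makes the continuation step transparent.

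The only delicate point is extending the identity beyond \(|z|<1\), for instance to the boundary cases in Theorem~\ref{thm:convergence}, and the plan is to argue as follows. On \(|z|=1\) with \(\operatorname{Re}(s)>3\), both sides are given by absolutely convergent series. Abel's theorem then passes the identity to the boundary, with \(D\Phi\) interpreted through \eqref{eq:euler-first}. For the remaining parameter values, both sides are the analytic continuations of the same functions, namely the right-hand side of \eqref{eq:hk-reduction-main}. The identity theorem therefore gives \eqref{eq:operator-form} wherever the functions are defined.
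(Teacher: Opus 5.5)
Your proof is correct and is essentially the paper's: substitute $A(r+1,k)=\alpha_k r^2+\beta_k r+1$ from \eqref{eq:ark} and identify the three resulting series with $D^2\Phi$, $D\Phi$ and $\Phi$ via the termwise formulas \eqref{euler} and \eqref{euler2} on $|z|<1$. Your radius-of-convergence justification, the alternative check through Theorem~\ref{thm:reduction}, and the Abel/continuation remarks are harmless extras; they make explicit what the paper covers with its blanket statement that the identities extend by analytic continuation.
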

	\begin{proof}
		By \eqref{eq:ark}, \eqref{euler}, and \eqref{euler2},
		\begin{align*}
			\left(\alpha_kD^2+\beta_kD+1\right)\Phi(z,s,a)
			&=
			\sum_{r=0}^{\infty}
			\frac{(\alpha_kr^2+\beta_kr+1)z^r}{(r+a)^s}\\
			&=
			H_k(z,s,a).
		\end{align*}
	\end{proof}
		
	The following theorem gives the Vandermonde inversion formula for a class of polynomially
	weighted Hurwitz--Lerch functions.
	
	\begin{theorem}
		\label{thm:vandermonde-inversion}
		Let \(d\in\mathbb{N}_0\), and let \(\lambda_0,\lambda_1,\ldots,\lambda_d\) be distinct nonzero complex numbers. Let \(c_0,c_1,\ldots,c_d\)	be nonzero complex constants. For \(k\in\mathbb{N}_0\), define
		\[
		W_k(r)
		=
		\sum_{j=0}^{d}
		c_j\lambda_j^k r^j
		\]
		and
		\begin{equation}
			\mathcal{F}_k(z,s,a)
			=
			\sum_{r=0}^{\infty}
			\frac{W_k(r)z^r}{(r+a)^s},
			\label{eq:general-weighted-lerch}
		\end{equation}
		where \(a\notin\{0,-1,-2,\ldots\}\), and the parameters are chosen so that
		the series converges absolutely. For each \(j=0,1,\ldots,d\), let
		\begin{equation}
			L_j(x)
			=
			\prod_{\substack{0\leq m\leq d\\m\neq j}}
			\frac{x-\lambda_m}{\lambda_j-\lambda_m}
			=
			\sum_{q=0}^{d}
			\ell_{j,q}x^q.
			\label{eq:lagrange-polynomials}
		\end{equation}
		Then
		\begin{equation}
			D^j\Phi(z,s,a)
			=
			\frac{1}{c_j\lambda_j^k}
			\sum_{q=0}^{d}
			\ell_{j,q}\mathcal{F}_{k+q}(z,s,a).
			\label{eq:general-vandermonde-inversion}
		\end{equation}
		Consequently, the \(d+1\) consecutive functions \(\mathcal{F}_k, \mathcal{F}_{k+1}, \ldots,		\mathcal{F}_{k+d}\)	determine \(\Phi, D\Phi, \ldots, D^d\Phi.\)
	\end{theorem}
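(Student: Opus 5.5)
The plan is to write each \(\mathcal{F}_{k+q}\) as a combination of the Euler derivatives \(D^i\Phi\), and then use the defining property of the Lagrange basis polynomials to pick out one of them.

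\textbf{Step 1: operator form.} Within the common domain of absolute convergence, termwise differentiation is justified. Applying \(D\) repeatedly to \eqref{eq:hurwitz-lerch} gives
\[
D^i\Phi(z,s,a)=\sum_{r=0}^{\infty}\frac{r^iz^r}{(r+a)^s}.
\]
This extends \eqref{euler} and \eqref{euler2}; each of these series is dominated by the absolutely convergent series defining \(\mathcal{F}_k\) in the relevant region. Interchanging the finite sum over \(i\) with the series in \eqref{eq:general-weighted-lerch}, exactly as in Proposition~\ref{prop:operator-form}, yields for every \(p\in\mathbb{N}_0\)
\[
\mathcal{F}_p(z,s,a)=\sum_{i=0}^{d}c_i\lambda_i^{p}\,D^i\Phi(z,s,a).
\]

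\textbf{Step 2: combine with the Lagrange coefficients.} Take \(p=k+q\), multiply by \(\ell_{j,q}\), and sum over \(q=0,\dots,d\). Swapping the two finite sums gives
\[
\sum_{q=0}^{d}\ell_{j,q}\mathcal{F}_{k+q}
=\sum_{i=0}^{d}c_i\lambda_i^{k}\Bigl(\sum_{q=0}^{d}\ell_{j,q}\lambda_i^{q}\Bigr)D^i\Phi
=\sum_{i=0}^{d}c_i\lambda_i^{k}L_j(\lambda_i)\,D^i\Phi.
\]
Since the \(\lambda_m\) are distinct, \(L_j\) in \eqref{eq:lagrange-polynomials} is well defined and satisfies \(L_j(\lambda_i)=\delta_{ij}\). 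The sum therefore collapses to \(c_j\lambda_j^kD^j\Phi\). Because \(c_j\neq0\) and \(\lambda_j\neq0\), we may divide by \(c_j\lambda_j^k\), which gives \eqref{eq:general-vandermonde-inversion}.

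The final claim then follows immediately, since every \(D^j\Phi\) is an explicit linear combination of \(\mathcal{F}_k,\dots,\mathcal{F}_{k+d}\). Equivalently, the Vandermonde matrix \((\lambda_i^q)_{q,i}\) is invertible, and its inverse has rows given by the coefficient vectors \((\ell_{j,q})_q\).

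\textbf{Main obstacle.} The only delicate point is analytic rather than algebraic: justifying the representation \(D^i\Phi=\sum r^iz^r(r+a)^{-s}\) for all \(i\le d\) on the domain where \(\mathcal{F}_k\) is assumed absolutely convergent. I would handle this by first restricting to \(|z|<1\). There the series in Step~1 converge locally uniformly in \(z\), so termwise differentiation is valid. For the remaining parameter values I would either invoke analytic continuation, as the paper does in Section~2, or use the series directly as definitions of \(D^i\Phi\) in the region of absolute convergence. Everything else is finite linear algebra.
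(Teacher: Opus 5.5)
Your proposal is correct and follows the paper's proof essentially step for step. Like the paper, you write $\mathcal{F}_{k+q}=\sum_{m}c_m\lambda_m^{k+q}D^m\Phi$, combine these with the coefficients $\ell_{j,q}$, use $L_j(\lambda_m)=\delta_{jm}$, and divide by $c_j\lambda_j^k$. Your extra care in justifying $D^i\Phi=\sum_r r^i z^r (r+a)^{-s}$ goes beyond the paper, which only says ``termwise application''; note that the domination by the $\mathcal{F}_k$ series holds for large $r$ because $c_d\lambda_d^k\neq0$.
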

	
	\begin{proof}
		For \(j=0,1,\ldots,d\), termwise application of the Euler operator gives
		\[
		D^j\Phi(z,s,a)
		=
		\sum_{r=0}^{\infty}
		\frac{r^jz^r}{(r+a)^s},
		\]
		where \(D^0\Phi=\Phi\). It follows from
		\eqref{eq:general-weighted-lerch} that
		\begin{equation}
			\mathcal{F}_k(z,s,a)
			=
			\sum_{j=0}^{d}
			c_j\lambda_j^kD^j\Phi(z,s,a).
			\label{eq:general-operator-form}
		\end{equation}
		For \(q=0,1,\ldots,d\), we replace \(k\) by \(k+q\) in
		\eqref{eq:general-operator-form} to get
		\begin{equation*}
			\mathcal{F}_{k+q}(z,s,a)
			=
			\sum_{m=0}^{d}
			c_m\lambda_m^{k+q}D^m\Phi(z,s,a).
		\end{equation*}
		Now, fix \(j\in\{0,1,\ldots,d\}\). Then
		\begin{align*}
			\sum_{q=0}^{d}
			\ell_{j,q}\mathcal{F}_{k+q}(z,s,a)
			&=
			\sum_{q=0}^{d}
			\ell_{j,q}
			\sum_{m=0}^{d}
			c_m\lambda_m^{k+q}D^m\Phi(z,s,a)
			\\
			&=
			\sum_{m=0}^{d}
			c_m\lambda_m^k
			\left(
			\sum_{q=0}^{d}
			\ell_{j,q}\lambda_m^q
			\right)
			D^m\Phi(z,s,a)
			\\
			&=
			\sum_{m=0}^{d}
			c_m\lambda_m^k
			L_j(\lambda_m)
			D^m\Phi(z,s,a).
		\end{align*}
		By the definition of \(L_j\) in	\eqref{eq:lagrange-polynomials},
		\begin{equation*}
			L_j(\lambda_m)
			=
			\begin{cases}
				1, & m=j,\\
				0, & m\neq j.
			\end{cases}
		\end{equation*}
		Hence,
		\[
		\sum_{q=0}^{d}
		\ell_{j,q}\mathcal{F}_{k+q}(z,s,a)
		=
		c_j\lambda_j^kD^j\Phi(z,s,a).
		\]
		Division by \(c_j\lambda_j^k\) gives \eqref{eq:general-vandermonde-inversion}.
	\end{proof}
	
	The coefficient matrix of the system may be written as
	\[
	M_k
	=
	V\operatorname{diag}
	\left(
	c_0\lambda_0^k,
	c_1\lambda_1^k,
	\ldots,
	c_d\lambda_d^k
	\right),
	\]
	where
	\[
	V
	=
	\begin{pmatrix}
		1&1&\cdots&1\\
		\lambda_0&\lambda_1&\cdots&\lambda_d\\
		\lambda_0^2&\lambda_1^2&\cdots&\lambda_d^2\\
		\vdots&\vdots&&\vdots\\
		\lambda_0^d&\lambda_1^d&\cdots&\lambda_d^d
	\end{pmatrix}.
	\]
	Hence,
	\[
	\det M_k
	=
	\left(
	\prod_{j=0}^{d}c_j\lambda_j^k
	\right)
	\prod_{0\leq i<j\leq d}
	(\lambda_j-\lambda_i).
	\]
	Since the constants \(c_j\) and \(\lambda_j\) are nonzero and the numbers
	\(\lambda_0,\lambda_1,\ldots,\lambda_d\) are distinct, the determinant is
	nonzero.
	
	The weighted family \(H_k(z,s,a)\) is obtained from
	Theorem~\ref{thm:vandermonde-inversion} by taking \(d=2\). Indeed,
	\begin{equation}
		A(r+1,k)
		=
		1\cdot1^k r^0
		+
		\frac{3}{2}2^k r
		+
		\frac{3}{2}4^k r^2.
		\label{eq:hk-spectral-weight}
	\end{equation}
	Thus, \((\lambda_0,\lambda_1,\lambda_2)=(1,2,4)\) and \((c_0,c_1,c_2)=\left(1,\frac{3}{2},\frac{3}{2}\right).\)
	With these choices,
	\[
	W_k(r)=A(r+1,k)
	\]
	and
	\[
	\mathcal{F}_k(z,s,a)=H_k(z,s,a).
	\]
	
	\begin{corollary}
		\label{cor:hk-vandermonde-inversion}
		For every \(k\in\mathbb{N}_0\), the following inversion formulas hold:
		\begin{align}
			\Phi(z,s,a)&=\frac{8}{3}H_k(z,s,a)-2H_{k+1}(z,s,a)+\frac{1}{3}H_{k+2}(z,s,a),
			\label{eq:inversion-phi}\\
			D\Phi(z,s,a)&=\frac{-4H_k(z,s,a)+5H_{k+1}(z,s,a)-H_{k+2}(z,s,a)}{			3\cdot2^k}, \ \text{and}\label{eq:inversion-dphi}\\
			D^2\Phi(z,s,a)&=\frac{2H_k(z,s,a)-3H_{k+1}(z,s,a)+H_{k+2}(z,s,a)}{		9\cdot4^k}.
			\label{eq:inversion-d2phi}
		\end{align}

	\end{corollary}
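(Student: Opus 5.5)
The plan is to specialize Theorem~\ref{thm:vandermonde-inversion} with \(d=2\), \((\lambda_0,\lambda_1,\lambda_2)=(1,2,4)\) and \((c_0,c_1,c_2)=\left(1,\frac32,\frac32\right)\). By \eqref{eq:hk-spectral-weight}, these choices give \(\mathcal{F}_k=H_k\). The nodes \(1,2,4\) are distinct and nonzero, and the \(c_j\) are nonzero, so the hypotheses hold. All that remains is to compute the coefficients \(\ell_{j,q}\) of the three Lagrange polynomials in \eqref{eq:lagrange-polynomials} and then divide by \(c_j\lambda_j^k\).

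The Lagrange polynomials are as follows.
\[
L_0(x)=\frac{(x-2)(x-4)}{(1-2)(1-4)}=\frac{x^2-6x+8}{3},
\qquad
L_1(x)=\frac{(x-1)(x-4)}{(2-1)(2-4)}=\frac{-x^2+5x-4}{2},
\]
\[
L_2(x)=\frac{(x-1)(x-2)}{(4-1)(4-2)}=\frac{x^2-3x+2}{6}.
\]
Applying \eqref{eq:general-vandermonde-inversion} with \(j=0\), where \(c_0\lambda_0^k=1\), gives
\[
\Phi=\tfrac83H_k-2H_{k+1}+\tfrac13H_{k+2},
\]
which is \eqref{eq:inversion-phi}. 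With \(j=1\) we have \(c_1\lambda_1^k=\frac32 2^k\). Multiplying the coefficients \(\left(-2,\frac52,-\frac12\right)\) by \(\frac{2}{3\cdot 2^k}\) yields \(\frac{-4H_k+5H_{k+1}-H_{k+2}}{3\cdot2^k}\), which is \eqref{eq:inversion-dphi}. With \(j=2\) we have \(c_2\lambda_2^k=\frac32 4^k\). Multiplying the coefficients \(\left(\frac13,-\frac12,\frac16\right)\) by \(\frac{2}{3\cdot4^k}\) yields \(\frac{2H_k-3H_{k+1}+H_{k+2}}{9\cdot 4^k}\), which is \eqref{eq:inversion-d2phi}.

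As a sanity check, take \(k=0\): the three formulas reduce to the identities already displayed in the examples section. Substituting \eqref{eq:operator-form} back into the right-hand sides should also return \(\Phi\), \(D\Phi\) and \(D^2\Phi\) respectively.

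There is no real obstacle here; the only care needed is arithmetic in the Lagrange coefficients and the normalization \(1/(c_j\lambda_j^k)\). The identities are established wherever the series for \(H_k\), \(H_{k+1}\) and \(H_{k+2}\) converge absolutely, for example \(|z|<1\) or \(\operatorname{Re}(s)>3\) by Theorem~\ref{thm:convergence}. They then extend to other parameter values by analytic continuation, as remarked at the start of this section.
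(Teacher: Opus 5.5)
Your proposal is correct and is essentially the paper's proof: you specialize Theorem~\ref{thm:vandermonde-inversion} to \(d=2\), \((\lambda_0,\lambda_1,\lambda_2)=(1,2,4)\), \((c_0,c_1,c_2)=(1,\tfrac32,\tfrac32)\), read off the Lagrange coefficients \(\ell_{j,q}\), and divide by \(c_j\lambda_j^k\). Your Lagrange coefficients and the normalizations \(1\), \(\tfrac{2}{3\cdot2^k}\), \(\tfrac{2}{3\cdot4^k}\) are all correct, and the three formulas can also be confirmed by substituting \(H_k=\tfrac32 4^kD^2\Phi+\tfrac32 2^kD\Phi+\Phi\) directly.
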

	
	\begin{proof}
		For the geometric factors \(\lambda_0=1\), \(\lambda_1=2\), and \(\lambda_2=4\), the Lagrange polynomials in \eqref{eq:lagrange-polynomials} are
		\begin{align*}
			L_0(x)
			&=
			\frac{(x-2)(x-4)}{(1-2)(1-4)}
			=
			\frac{x^2-6x+8}{3},
			\\
			L_1(x)
			&=
			\frac{(x-1)(x-4)}{(2-1)(2-4)}
			=
			-\frac{x^2-5x+4}{2}, \ \text{and}
			\\
			L_2(x) 
			&=
			\frac{(x-1)(x-2)}{(4-1)(4-2)}
			=
			\frac{x^2-3x+2}{6}.
		\end{align*}
		Hence,
		\begin{align*}
			(\ell_{0,0},\ell_{0,1},\ell_{0,2})&=\left(\frac{8}{3},-2,\frac{1}{3}\right),\\
			(\ell_{1,0},\ell_{1,1},\ell_{1,2})&=\left(-2,\frac{5}{2},-\frac{1}{2}\right), \ \text{and}\\
			(\ell_{2,0},\ell_{2,1},\ell_{2,2})&=\left(\frac{1}{3},-\frac{1}{2},\frac{1}{6}
			\right).
		\end{align*}
		For \(j=0\), Theorem \ref{thm:vandermonde-inversion} gives
		\eqref{eq:inversion-phi}. Applying Theorem \ref{thm:vandermonde-inversion} with \(j=1\) and \(j=2\)
		gives \eqref{eq:inversion-dphi} and \eqref{eq:inversion-d2phi},	respectively.
	\end{proof}
	
	Since these geometric factors are distinct, the three consecutive functions
	\(H_k\), \(H_{k+1}\), and \(H_{k+2}\) recover \(\Phi(z,s,a)\), \(D\Phi(z,s,a)\), and \(D^2\Phi(z,s,a)\).
			
	\section{Recurrence Relations and Generating Functions}
	
	From Proposition~\ref{prop:operator-form},
	\[
	H_k(z,s,a)
	=
	\frac{3}{2}4^kD^2\Phi(z,s,a)
	+
	\frac{3}{2}2^kD\Phi(z,s,a)
	+
	\Phi(z,s,a).
	\]
	
	\begin{theorem}
		\label{thm:recurrence}
		For every \(k\in\mathbb{N}_0\),
		\begin{equation}
			H_{k+3}(z,s,a)
			=
			7H_{k+2}(z,s,a)
			-
			14H_{k+1}(z,s,a)
			+
			8H_k(z,s,a).
			\label{eq:third-order-recurrence}
		\end{equation}
	\end{theorem}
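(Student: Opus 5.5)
The plan is to use the operator representation from Proposition~\ref{prop:operator-form}. In a common domain of absolute convergence it expresses each \(H_k\) as
\[
H_k(z,s,a)=\tfrac{3}{2}4^k D^2\Phi(z,s,a)+\tfrac{3}{2}2^k D\Phi(z,s,a)+1^k\,\Phi(z,s,a).
\]
The three quantities \(D^2\Phi\), \(D\Phi\), \(\Phi\) do not depend on \(k\). So, for fixed \((z,s,a)\), the sequence \(k\mapsto H_k(z,s,a)\) is a linear combination of the geometric sequences \(1^k\), \(2^k\), \(4^k\).

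Any such combination satisfies the linear recurrence whose characteristic polynomial is
\[
(x-1)(x-2)(x-4)=x^3-7x^2+14x-8.
\]
The key step is to check this for each geometric component separately. For \(\lambda\in\{1,2,4\}\) we have
\[
\lambda^{k+3}-7\lambda^{k+2}+14\lambda^{k+1}-8\lambda^k=\lambda^k(\lambda^3-7\lambda^2+14\lambda-8)=\lambda^k(\lambda-1)(\lambda-2)(\lambda-4)=0.
\]

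We then form \(H_{k+3}-7H_{k+2}+14H_{k+1}-8H_k\) using the displayed representation. The coefficient of \(D^2\Phi\) is \(\tfrac32\) times this expression with \(\lambda=4\), which vanishes. The coefficient of \(D\Phi\) is \(\tfrac32\) times the case \(\lambda=2\), which also vanishes. The coefficient of \(\Phi\) is the case \(\lambda=1\), namely \(1-7+14-8=0\). This gives \eqref{eq:third-order-recurrence} in the domain of absolute convergence.

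Alternatively, the same cancellation can be done at the level of coefficients. One checks that \(A(r+1,k+3)-7A(r+1,k+2)+14A(r+1,k+1)-8A(r+1,k)=0\) for every \(r\), using \eqref{eq:ark}, and then sums termwise.

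There is no real obstacle; the only care needed is the domain of validity. The identity is first established wherever all four series converge absolutely, for instance \(|z|<1\) with any \(s\), or \(|z|=1\) with \(\operatorname{Re}(s)>3\) by Theorem~\ref{thm:convergence}. It then extends to the other admissible parameters. In the conditionally convergent range one can argue directly from the termwise coefficient identity, since a finite linear combination of convergent series converges to the corresponding combination of sums. Elsewhere one uses analytic continuation via Theorem~\ref{thm:reduction}, which writes every \(H_k\) in terms of the same three functions \(\Phi(z,s-j,a)\).
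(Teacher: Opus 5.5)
Your proposal is correct and takes the same approach as the paper: write \(H_k\) as a combination of \(4^k\), \(2^k\), \(1^k\) with \(k\)-independent coefficients \(\tfrac32 D^2\Phi\), \(\tfrac32 D\Phi\), \(\Phi\), and note that each geometric component satisfies the recurrence with characteristic polynomial \((x-1)(x-2)(x-4)=x^3-7x^2+14x-8\). Your explicit check for each \(\lambda\), and your remarks on the domain of validity (including the termwise coefficient identity in the conditionally convergent range), only add detail that the paper leaves implicit.
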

	
	\begin{proof}
		The three component sequences have characteristic roots \(4\), \(2\), and
		\(1\). Hence, the characteristic polynomial is
		\[
		(\lambda-4)(\lambda-2)(\lambda-1)
		=
		\lambda^3-7\lambda^2+14\lambda-8.
		\]
		This implies that every linear combination of \(4^k\), \(2^k\), and \(1\) satisfies the stated
		recurrence.
	\end{proof}
	
	The recurrence in \(k\) comes from the geometric decomposition of the
	coefficients of the \(k\)-augmented centered triangular numbers. This differs
	from generalized Hurwitz--Lerch constructions where extra parameters are
	inserted into the kernel or denominator of the series
	\cite{ChoiSahinYagciKim2020,NadeemUsmanNisarBaleanu2020}.
	
	\begin{theorem}
		\label{thm:ordinary-generating-function}
		Let
		\[
		G(y;z,s,a)
		=
		\sum_{k=0}^{\infty}
		H_k(z,s,a)y^k.
		\]
		For \(|y|<\frac14\),
		\begin{align}
			G(y;z,s,a)
			&=
			\frac{
				\frac{3}{2}D^2\Phi(z,s,a)
			}{
				1-4y
			}
			+
			\frac{
				\frac{3}{2}D\Phi(z,s,a)
			}{
				1-2y
			}
			+
			\frac{
				\Phi(z,s,a)
			}{
				1-y
			}.
			\label{eq:ordinary-generating-operator}
		\end{align}
		Using \eqref{eq:euler-first} and \eqref{eq:euler-second}, this may also be
		written as
		\begin{align}
			G(y;z,s,a)
			&=
			\frac{
				\frac{3}{2}
				\left[
				\Phi(z,s-2,a)
				-
				2a\Phi(z,s-1,a)
				+
				a^2\Phi(z,s,a)
				\right]
			}{
				1-4y
			}
			\notag\\
			&\quad+
			\frac{
				\frac{3}{2}
				\left[
				\Phi(z,s-1,a)
				-
				a\Phi(z,s,a)
				\right]
			}{
				1-2y
			}
			+
			\frac{
				\Phi(z,s,a)
			}{
				1-y
			}.
			\label{eq:ordinary-generating-shifted}
		\end{align}
	\end{theorem}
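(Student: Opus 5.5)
The plan is to start from the operator form in Proposition~\ref{prop:operator-form}, written as
\[
H_k(z,s,a)=\tfrac32 4^k D^2\Phi(z,s,a)+\tfrac32 2^k D\Phi(z,s,a)+\Phi(z,s,a).
\]
Here \(z,s,a\) are fixed in a region where \(\Phi\), \(D\Phi\), \(D^2\Phi\) are defined, for instance a common domain of absolute convergence from Theorem~\ref{thm:convergence}, with the usual extension by analytic continuation. The key point is that the three quantities \(\Phi\), \(D\Phi\), \(D^2\Phi\) do not depend on \(k\). So \(k\mapsto H_k\) is an explicit linear combination of the three geometric sequences \(4^k\), \(2^k\), \(1^k\) with constant coefficients.

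Next I multiply by \(y^k\) and sum over \(k\geq0\), splitting \(G\) into three series. The only analytic point is convergence. For \(|y|<\tfrac14\) we have \(|4y|<1\), \(|2y|<\tfrac12\) and \(|y|<\tfrac14\), so each of \(\sum 4^ky^k\), \(\sum 2^ky^k\), \(\sum y^k\) converges absolutely. Each is multiplied by a finite constant, so the sum of the three convergent series equals the series of the sums, and termwise splitting is justified.

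The geometric series formula then gives \(\sum_k (4y)^k=1/(1-4y)\), \(\sum_k(2y)^k=1/(1-2y)\) and \(\sum_k y^k=1/(1-y)\). Substituting these yields \eqref{eq:ordinary-generating-operator} directly. For \eqref{eq:ordinary-generating-shifted}, I replace \(D\Phi\) by \eqref{eq:euler-first} and \(D^2\Phi\) by \eqref{eq:euler-second} inside the numerators. This step is valid wherever those identities hold, including by analytic continuation.

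I expect no real obstacle. The only subtlety is making sure the quantities \(D^j\Phi\) are finite and independent of \(k\), which is immediate in the stated domain. I would also remark that \(|y|<\tfrac14\) is exactly the radius dictated by the dominant root \(4\) of the characteristic polynomial in Theorem~\ref{thm:recurrence}.
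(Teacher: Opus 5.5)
Your proposal is correct and matches the paper's proof. Like the paper, you insert the operator form \(H_k=\tfrac32 4^kD^2\Phi+\tfrac32 2^kD\Phi+\Phi\), sum the three geometric series for \(|y|<\tfrac14\), and then substitute \eqref{eq:euler-first} and \eqref{eq:euler-second}; your explicit justification for splitting the series into three convergent pieces is a small detail the paper leaves implicit.
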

	
	\begin{proof}
		Using the operator representation,
		\begin{align*}
			\sum_{k=0}^{\infty}
			H_k(z,s,a)y^k
			&=
			\frac{3}{2}D^2\Phi(z,s,a)
			\sum_{k=0}^{\infty}(4y)^k
			\\
			&\quad+
			\frac{3}{2}D\Phi(z,s,a)
			\sum_{k=0}^{\infty}(2y)^k
			+
			\Phi(z,s,a)
			\sum_{k=0}^{\infty}y^k.
		\end{align*}
		The geometric series give the stated formula. The second form follows by
		substituting the shifted expressions for \(D\Phi(z,s,a)\) and
		\(D^2\Phi(z,s,a)\).
	\end{proof}
	
	\begin{corollary}
		\label{cor:recurrence-generating-function}
		The generating function may also be written as
		\begin{align*}
			G(y;z,s,a)
			&=
			\frac{
				H_0(z,s,a)
				+
				\left[
				H_1(z,s,a)-7H_0(z,s,a)
				\right]y
			}{
				1-7y+14y^2-8y^3
			}
			\\
			&\quad+
			\frac{
				\left[
				H_2(z,s,a)
				-
				7H_1(z,s,a)
				+
				14H_0(z,s,a)
				\right]y^2
			}{
				1-7y+14y^2-8y^3
			}.
		\end{align*}
	\end{corollary}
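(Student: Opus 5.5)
The plan is to derive the rational form directly from the third-order recurrence of Theorem~\ref{thm:recurrence}, by the standard argument for linear recurrences with constant coefficients. Write \(H_k\) for \(H_k(z,s,a)\) and \(P(y)=1-7y+14y^2-8y^3\). This is the reversed characteristic polynomial \(y^3\chi(1/y)\) with \(\chi(\lambda)=\lambda^3-7\lambda^2+14\lambda-8\). I will work for \(|y|<\frac14\), so that \(G\) converges by Theorem~\ref{thm:ordinary-generating-function}. I will also assume \(z,s,a\) lie in a common domain of absolute convergence, so that each \(H_k\) is a fixed complex number.

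The key step is to compute the product \(P(y)G(y;z,s,a)\) as a Cauchy product of power series in \(y\). The coefficient of \(y^n\) is
\[
H_n-7H_{n-1}+14H_{n-2}-8H_{n-3},
\]
with the convention \(H_{-1}=H_{-2}=H_{-3}=0\).

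For \(n\ge3\), substituting \(k=n-3\) into \eqref{eq:third-order-recurrence} shows that this coefficient vanishes. For \(n=0,1,2\), only the available terms survive. They give \(H_0\), then \(H_1-7H_0\), then \(H_2-7H_1+14H_0\). Hence
\[
P(y)G(y;z,s,a)=H_0+(H_1-7H_0)y+(H_2-7H_1+14H_0)y^2.
\]

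Dividing by \(P(y)\) then gives the stated formula, provided \(P(y)\neq0\). Since \(P(y)=(1-y)(1-2y)(1-4y)\), its zeros are \(1,\frac12,\frac14\), none of which lies in the disc \(|y|<\frac14\).

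As a cross-check I could instead put the three partial fractions in \eqref{eq:ordinary-generating-operator} over the common denominator \(P(y)\). Their numerators could then be matched against \(H_0\), \(H_1\), \(H_2\) through Proposition~\ref{prop:operator-form}, but this is unnecessary for the argument.

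The only point requiring care is justifying the Cauchy product with a cubic polynomial. This is immediate, since \(P\) is a finite sum and \(G\) converges absolutely for \(|y|<\frac14\). I therefore do not expect a real obstacle; the main risk is an arithmetic slip in the low-order coefficients. The identity then extends to other admissible \((z,s,a)\) by analytic continuation, as elsewhere in the paper.
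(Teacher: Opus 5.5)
Your proposal is correct and matches the paper's proof: multiply \(G\) by \(1-7y+14y^2-8y^3=(1-y)(1-2y)(1-4y)\), and use the recurrence \eqref{eq:third-order-recurrence} to make every coefficient of \(y^n\) with \(n\ge3\) vanish, leaving the three low-order coefficients. Your remarks on the Cauchy product and on \(P(y)\neq0\) for \(|y|<\frac14\) state explicitly what the paper's one-line argument leaves implicit.
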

	
	\begin{proof}
		This follows by multiplying \(G(y;z,s,a)\) by
		\[
		1-7y+14y^2-8y^3
		=
		(1-y)(1-2y)(1-4y)
		\]
		and using \eqref{eq:third-order-recurrence}.
	\end{proof}
	
	The inversion formulas also give
	\[
	H_{k+2}(z,s,a)
	-
	3H_{k+1}(z,s,a)
	+
	2H_k(z,s,a)
	=
	9\cdot4^kD^2\Phi(z,s,a),
	\]
	and
	\[
	-4H_k(z,s,a)
	+
	5H_{k+1}(z,s,a)
	-
	H_{k+2}(z,s,a)
	=
	3\cdot2^kD\Phi(z,s,a).
	\]
	These identities follow directly from
	\eqref{eq:inversion-dphi} and \eqref{eq:inversion-d2phi}.
	
	\begin{corollary}
		For \(N\in\mathbb{N}_0\),
		\begin{align}
			\sum_{k=0}^{N}
			H_k(z,s,a)
			&=
			\frac{1}{2}
			\left(
			4^{N+1}-1
			\right)
			D^2\Phi(z,s,a)
			\notag\\
			&\quad+
			\frac{3}{2}
			\left(
			2^{N+1}-1
			\right)
			D\Phi(z,s,a)
			+
			(N+1)\Phi(z,s,a).
			\label{eq:finite-sum-hk}
		\end{align}
	\end{corollary}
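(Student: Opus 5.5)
The plan is to sum the operator representation of Proposition~\ref{prop:operator-form} over \(k\) and evaluate three finite geometric sums. Write
\[
H_k(z,s,a)=\tfrac32 4^kD^2\Phi(z,s,a)+\tfrac32 2^kD\Phi(z,s,a)+\Phi(z,s,a),
\]
which is the expanded form of \eqref{eq:operator-form} recorded at the start of this section. Summing from \(k=0\) to \(N\) is a finite operation, so we may interchange it freely with the fixed quantities \(D^2\Phi\), \(D\Phi\), \(\Phi\). This gives
\[
\sum_{k=0}^N H_k=\tfrac32\Big(\sum_{k=0}^N4^k\Big)D^2\Phi+\tfrac32\Big(\sum_{k=0}^N2^k\Big)D\Phi+(N+1)\Phi .
\]

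Next we apply the geometric sum formulas
\[
\sum_{k=0}^N4^k=\frac{4^{N+1}-1}{3},\qquad \sum_{k=0}^N2^k=2^{N+1}-1.
\]
The factor \(\tfrac32\cdot\tfrac13=\tfrac12\) gives the coefficient \(\tfrac12(4^{N+1}-1)\) of \(D^2\Phi\). The coefficient of \(D\Phi\) is \(\tfrac32(2^{N+1}-1)\), and the coefficient of \(\Phi\) is \(N+1\). This is exactly \eqref{eq:finite-sum-hk}.

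There is no genuine obstacle. The only point to note is validity: as elsewhere in this section, the identity is first established in a common domain of absolute convergence, where Proposition~\ref{prop:operator-form} holds. It then extends by analytic continuation wherever the functions involved are defined.

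As an alternative check, one could instead sum the generating function of Theorem~\ref{thm:ordinary-generating-function} truncated at degree \(N\). The direct summation is cleaner, so that is the route to use.
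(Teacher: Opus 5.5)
Your proposal is correct and matches the paper's proof: expand $H_k=\tfrac32 4^kD^2\Phi+\tfrac32 2^kD\Phi+\Phi$ using the operator representation, then evaluate the three finite geometric sums, where $\tfrac32\cdot\tfrac13=\tfrac12$ gives the leading coefficient. Your remark about working first in a common domain of absolute convergence and then continuing analytically agrees with the convention the paper adopts for these identities.
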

	
	\begin{proof}
		From the operator representation,
		\[
		H_k(z,s,a)
		=
		\frac{3}{2}4^kD^2\Phi(z,s,a)
		+
		\frac{3}{2}2^kD\Phi(z,s,a)
		+
		\Phi(z,s,a).
		\]
		Using
		\[
		\sum_{k=0}^{N}4^k
		=
		\frac{4^{N+1}-1}{3},
		\qquad
		\sum_{k=0}^{N}2^k
		=
		2^{N+1}-1,
		\]
		and
		\[
		\sum_{k=0}^{N}\Phi(z,s,a)
		=
		(N+1)\Phi(z,s,a),
		\]
		\eqref{eq:finite-sum-hk} follows.
	\end{proof}
	
	\section{Special Values}
	
We first consider the values at \(z=1\). Related Hurwitz--Lerch type
poly-Cauchy and poly-Bernoulli families have been studied in
\cite{LacpaoCorcinoVega2019,Lacpao2023}.
	
	\begin{theorem}
		\label{thm:hurwitz-zeta-value}
		For \(\operatorname{Re}(s)>3\),
		\begin{align}
			H_k(1,s,a)
			&=
			\alpha_k\zeta(s-2,a)
			+
			(\beta_k-2a\alpha_k)\zeta(s-1,a)
			\notag\\
			&\quad+
			(1-a\beta_k+a^2\alpha_k)\zeta(s,a).
			\label{eq:hk-hurwitz-zeta}
		\end{align}
		The right-hand side gives a meromorphic continuation of
		\(H_k(1,s,a)\) to \(s\in\mathbb{C}\). This continuation has at most
		simple poles at \(s=1\), \(s=2\), and \(s=3\). The corresponding residues are
		\(1-a\beta_k+a^2\alpha_k\), \(\beta_k-2a\alpha_k\), and \(\alpha_k\), respectively.
	\end{theorem}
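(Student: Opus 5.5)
The proof specializes Theorem~\ref{thm:reduction} to \(z=1\) and then reads off the poles from the known structure of the Hurwitz zeta function. By Theorem~\ref{thm:convergence}, the defining series of \(H_k(1,s,a)\) converges absolutely for \(\operatorname{Re}(s)>3\). In that half-plane the three series
\[
\sum_{r\ge0}\frac{r^j}{(r+a)^s},\qquad j=0,1,2,
\]
also converge absolutely, since \(|r^j(r+a)^{-s}|\asymp r^{j-\operatorname{Re}(s)}\) with \(j\le 2\). So the splitting of \(A(r+1,k)=\alpha_kr^2+\beta_kr+1\) in the proof of Theorem~\ref{thm:reduction} is legitimate termwise at \(z=1\). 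The rewriting \(r=(r+a)-a\) and \(r^2=(r+a)^2-2a(r+a)+a^2\) then produces the three series \(\sum(r+a)^{-(s-2)}\), \(\sum(r+a)^{-(s-1)}\) and \(\sum(r+a)^{-s}\). Each of these converges absolutely when \(\operatorname{Re}(s)>3\), and they equal \(\zeta(s-2,a)\), \(\zeta(s-1,a)\) and \(\zeta(s,a)\). Collecting coefficients exactly as in Theorem~\ref{thm:reduction} gives \eqref{eq:hk-hurwitz-zeta}. One small point to note: for \(a\notin\{0,-1,-2,\dots\}\) with \(\operatorname{Re}(a)\le 0\), the identity \(\zeta(s,a)=\sum (r+a)^{-s}\) still holds with the principal branch fixed in the preliminaries, because that is the definition of \(\Phi(1,s,a)\) used in the paper.

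For the continuation, I will invoke the standard fact \cite{NIST2026,SrivastavaChoi2012} that \(\zeta(w,a)\) extends meromorphically to \(w\in\mathbb{C}\). Its only singularity is a simple pole at \(w=1\) with residue \(1\). Hence:
\begin{itemize}
\item \(\zeta(s,a)\) has a simple pole at \(s=1\) with residue \(1\);
\item \(\zeta(s-1,a)\) has a simple pole at \(s=2\) with residue \(1\);
\item \(\zeta(s-2,a)\) has a simple pole at \(s=3\) with residue \(1\).
\end{itemize}
The right-hand side of \eqref{eq:hk-hurwitz-zeta} is therefore meromorphic on \(\mathbb{C}\). It agrees with \(H_k(1,s,a)\) on \(\operatorname{Re}(s)>3\), so by the identity theorem it is the (unique) meromorphic continuation.

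To get the residues, note that near each of the points \(s=1,2,3\) exactly one of the three summands is singular, while the other two are holomorphic there. The residue is then the coefficient in front of the singular term. This gives
\begin{itemize}
\item \(1-a\beta_k+a^2\alpha_k\) at \(s=1\),
\item \(\beta_k-2a\alpha_k\) at \(s=2\),
\item \(\alpha_k\) at \(s=3\).
\end{itemize}
The phrase ``at most'' in the statement covers the cases where a coefficient vanishes. For example, \(1-a\beta_k+a^2\alpha_k=0\) occurs for special complex \(a\), and then that pole is removable. Since \(\alpha_k\neq0\), the pole at \(s=3\) is always genuine.

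There is no serious obstacle. The only step needing care is justifying the termwise splitting at \(z=1\), where the convergence is not geometric. The absolute-convergence estimate above handles it directly, without appealing to the Euler-operator identities, which were derived under \(|z|<1\)-type conditions.
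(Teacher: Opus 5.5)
Your proposal is correct and follows the paper's proof: you specialize Theorem~\ref{thm:reduction} at $z=1$, identify $\Phi(1,\cdot,a)$ with $\zeta(\cdot,a)$, and read off the continuation and the residues from the simple pole of $\zeta(w,a)$ at $w=1$, whose residue is $1$. Your added details make explicit what the paper's one-line proof leaves implicit, without changing the route: the direct absolute-convergence justification of the splitting at $z=1$, the appeal to the identity theorem, and the check that only one summand is singular at each of $s=1,2,3$.
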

	
	\begin{proof}
		Take \(z=1\) in Theorem \ref{thm:reduction} and use \(\Phi(1,s,a)=\zeta(s,a)\). The continuation follows from the meromorphic continuation of the Hurwitz	zeta function. Since the Hurwitz zeta function has residue \(1\) at its pole \(s=1\), the stated residues follow from the coefficients of \(\zeta(s,a)\), \(\zeta(s-1,a)\), and \(\zeta(s-2,a)\),  respectively.
	\end{proof}
	
	\begin{corollary}
		For \(a=1\),
		\begin{equation*}
			H_k(1,s,1)
			=
			\alpha_k\zeta(s-2)
			+
			(\beta_k-2\alpha_k)\zeta(s-1)
			+
			(1-\beta_k+\alpha_k)\zeta(s).
		\end{equation*}
		
	\end{corollary}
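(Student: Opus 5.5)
The plan is to specialize Theorem~\ref{thm:hurwitz-zeta-value} to \(a=1\) and use the identity \(\zeta(s,1)=\zeta(s)\). Concretely, for \(\operatorname{Re}(s)>3\) the defining series \(\zeta(s,1)=\sum_{r=0}^{\infty}(r+1)^{-s}=\sum_{n=1}^{\infty}n^{-s}\) is exactly the Riemann zeta series. The same reindexing applies to \(\zeta(s-1,1)\) and \(\zeta(s-2,1)\), since \(\operatorname{Re}(s-2)>1\) there. The hypothesis \(a\notin\{0,-1,-2,\ldots\}\) is satisfied by \(a=1\), so the theorem applies without change.

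Next I would substitute \(a=1\) into the three coefficients appearing in \eqref{eq:hk-hurwitz-zeta}. The coefficient of \(\zeta(s-2)\) stays \(\alpha_k\). The coefficient of \(\zeta(s-1)\) becomes \(\beta_k-2\alpha_k\). The coefficient of \(\zeta(s)\) becomes \(1-\beta_k+\alpha_k\). This yields the stated formula on the half-plane \(\operatorname{Re}(s)>3\).

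Finally, both sides are meromorphic in \(s\), because the Riemann zeta function continues meromorphically with a single simple pole at \(1\). The identity therefore extends to all \(s\in\mathbb{C}\) by analytic continuation, matching the continuation described in Theorem~\ref{thm:hurwitz-zeta-value}. There is no real obstacle here. The only point needing care is to note that \(\zeta(s,1)=\zeta(s)\) holds as an identity of meromorphic functions, not just on the region of convergence; this follows because the two functions agree on an open half-plane.
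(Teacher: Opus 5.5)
Your proposal is correct and follows the paper's route: the paper's proof is simply to set \(a=1\) in Theorem~\ref{thm:hurwitz-zeta-value}. Your extra remarks, that \(\zeta(s,1)=\zeta(s)\) holds on the half-plane of convergence and hence as meromorphic functions, and that the coefficients specialize as stated, fill in the details the paper leaves implicit.
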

	
	\begin{proof}
		Set \(a=1\) in Theorem~\ref{thm:hurwitz-zeta-value}.
	\end{proof}
	
	\begin{theorem}
		For \(m\in\mathbb{N}_0\), the continued value at \(s=-m\) is
		\begin{align}
			H_k(1,-m,a)
			&=
			-\alpha_k
			\frac{B_{m+3}(a)}{m+3}
			-
			(\beta_k-2a\alpha_k)
			\frac{B_{m+2}(a)}{m+2}
			\notag\\
			&\quad-
			(1-a\beta_k+a^2\alpha_k)
			\frac{B_{m+1}(a)}{m+1}.
			\label{eq:hk-bernoulli}
		\end{align}
	\end{theorem}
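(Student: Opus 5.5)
The plan is to evaluate the meromorphic continuation given in Theorem~\ref{thm:hurwitz-zeta-value} at \(s=-m\). For \(\operatorname{Re}(s)>3\) we have \eqref{eq:hk-hurwitz-zeta}. Its right-hand side is meromorphic in \(s\), with possible poles only at \(s=1,2,3\). By definition, the continued value of \(H_k(1,s,a)\) at \(s=-m\) is the value of that right-hand side there. Since \(m\in\mathbb{N}_0\), the point \(s=-m\) is nonpositive and so avoids \(\{1,2,3\}\). Hence the continuation is holomorphic at \(s=-m\), and we may simply substitute.

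Substituting \(s=-m\) gives
\[
H_k(1,-m,a)=\alpha_k\zeta(-m-2,a)+(\beta_k-2a\alpha_k)\zeta(-m-1,a)+(1-a\beta_k+a^2\alpha_k)\zeta(-m,a).
\]
Each shifted argument is again a nonpositive integer: \(-m-2=-(m+2)\) and \(-m-1=-(m+1)\), with \(m+2,m+1\in\mathbb{N}_0\). So we can apply \eqref{eq:zeta-bernoulli} three times, once with \(m\) replaced by \(m+2\), once by \(m+1\), and once by \(m\). This yields
\[
\zeta(-m-2,a)=-\frac{B_{m+3}(a)}{m+3},\qquad
\zeta(-m-1,a)=-\frac{B_{m+2}(a)}{m+2},\qquad
\zeta(-m,a)=-\frac{B_{m+1}(a)}{m+1}.
\]
Inserting these into the display gives \eqref{eq:hk-bernoulli} directly.

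The only point needing care is the justification that the value at \(s=-m\) is well defined and equals the substituted expression. This rests on two facts:
\begin{itemize}
\item the identity \eqref{eq:hk-hurwitz-zeta} is, by Theorem~\ref{thm:hurwitz-zeta-value}, the definition of the continuation;
\item \(\zeta(\cdot,a)\) is entire apart from its simple pole at \(1\), so each of \(\zeta(s-2,a)\), \(\zeta(s-1,a)\), \(\zeta(s,a)\) is finite at \(s=-m\).
\end{itemize}
Everything else is direct substitution, so I expect no genuine obstacle.

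As a sanity check, I would compare the result with Theorem~\ref{thm:reduction} combined with \eqref{eq:zeta-bernoulli} for small \(m\) and \(a=1\), for instance \(k=0\), \(m=0\).
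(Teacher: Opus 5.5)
Your proposal is correct and is the paper's own argument: evaluate the meromorphic continuation \eqref{eq:hk-hurwitz-zeta} from Theorem~\ref{thm:hurwitz-zeta-value} at \(s=-m\), which avoids the possible poles at \(1,2,3\), and then apply \eqref{eq:zeta-bernoulli} with \(m\) replaced by \(m+2\), \(m+1\), and \(m\). Your explicit check that \(-m-2,-m-1,-m\) are nonpositive integers, so that each Hurwitz zeta term is finite there, is the only justification needed; the paper's one-line proof leaves it implicit.
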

	
	\begin{proof}
		Use the meromorphic continuation in
		Theorem~\ref{thm:hurwitz-zeta-value}, put \(s=-m\), and apply
		\eqref{eq:zeta-bernoulli}.
	\end{proof}
	
For \(m\in\mathbb{N}_0\) and \(|z|<1\), define
\begin{equation}
	P_{m,k}(z,a)
	=
	H_k(z,-m,a)
	=
	\sum_{r=0}^{\infty}
	A(r+1,k)(r+a)^m z^r.
	\label{eq:pmk-definition}
\end{equation}
The formulas below show that \(P_{m,k}(z,a)\) extends to a rational
function of \(z\), defined for \(z\neq1\). We call these functions
weighted Lerch rational functions.
	\begin{theorem}
		\label{thm:pmk-egf}
		For \(|z|e^{|t|}<1\), the exponential generating function of \(P_{m,k}(z,a)\) with respect
		to \(m\) is
		\begin{align}
			\sum_{m=0}^{\infty}
			P_{m,k}(z,a)\frac{t^m}{m!}
			&=
			e^{at}
			\left[
			\alpha_k
			\frac{ze^t(1+ze^t)}{(1-ze^t)^3}
			+
			\beta_k
			\frac{ze^t}{(1-ze^t)^2}
			+
			\frac{1}{1-ze^t}
			\right].
			\label{eq:pmk-egf}
		\end{align}
	\end{theorem}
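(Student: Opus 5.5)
The plan is to swap the order of summation in a double series and then evaluate the resulting inner sums in closed form. By the definition \eqref{eq:pmk-definition}, the left-hand side of \eqref{eq:pmk-egf} is
\[
\sum_{m=0}^{\infty}\sum_{r=0}^{\infty}A(r+1,k)(r+a)^m z^r\frac{t^m}{m!}.
\]

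First I will justify the interchange. The absolute double sum is bounded by
\[
\sum_{r}|A(r+1,k)|\,|z|^r e^{|r+a||t|}\le e^{|a||t|}\sum_r |A(r+1,k)|\,(|z|e^{|t|})^r.
\]
This is finite because \(A(r+1,k)\) grows quadratically and \(|z|e^{|t|}<1\). Fubini's theorem for series therefore applies. Summing over \(m\) first gives
\[
\sum_{r=0}^{\infty}A(r+1,k)e^{(r+a)t}z^r=e^{at}\sum_{r=0}^{\infty}(\alpha_k r^2+\beta_k r+1)w^r,\qquad w=ze^t,
\]
using \eqref{eq:ark}. Since \(|w|<1\), the remaining sums are elementary.

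Next I will evaluate the three sums in \(w\). The basic one is \(\sum_r w^r=1/(1-w)\). Applying the Euler operator \(w\,d/dw\) once gives
\[
\sum_r r w^r=\frac{w}{(1-w)^2}.
\]
Applying it a second time gives
\[
\sum_r r^2w^r=\frac{w(1+w)}{(1-w)^3}.
\]
This mirrors Proposition~\ref{prop:operator-form}, with \(\Phi\) replaced by the geometric series \(\Phi(w,0,1)\)-type kernel. Substituting \(w=ze^t\) and collecting the three terms with coefficients \(\alpha_k,\beta_k,1\) yields \eqref{eq:pmk-egf}.

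The only real obstacle is the analytic justification of the interchange, which the domination bound above handles. The differentiations are routine computations on power series inside the disc \(|w|<1\).
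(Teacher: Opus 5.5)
Your proposal is correct and follows the paper's proof essentially step for step. You interchange the double sum, sum the exponential series to obtain \(e^{at}\sum_{r}A(r+1,k)(ze^t)^r\), and evaluate the three geometric-type sums in \(w=ze^t\). Your explicit domination bound \(e^{|a||t|}\sum_r |A(r+1,k)|(|z|e^{|t|})^r\) makes precise the absolute convergence that the paper simply asserts in one line.
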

	
	\begin{proof}
		By \eqref{eq:pmk-definition},
		\begin{align*}
			\sum_{m=0}^{\infty}
			P_{m,k}(z,a)\frac{t^m}{m!}
			&=
			\sum_{m=0}^{\infty}
			\sum_{r=0}^{\infty}
			A(r+1,k)(r+a)^m z^r\frac{t^m}{m!}.
		\end{align*}
		The condition \(|z|e^{|t|}<1\) guarantees absolute convergence, so
		the order of summation may be interchanged. Hence,
		\begin{align*}
			\sum_{m=0}^{\infty}
			P_{m,k}(z,a)\frac{t^m}{m!}
			&=
			\sum_{r=0}^{\infty}
			A(r+1,k)z^r
			\sum_{m=0}^{\infty}
			\frac{((r+a)t)^m}{m!}\\
			&=
			e^{at}
			\sum_{r=0}^{\infty}
			A(r+1,k)(ze^t)^r.
		\end{align*}
		Using
		\[
		A(r+1,k)=\alpha_kr^2+\beta_kr+1
		\]
		and the standard geometric-series identities with \(q=ze^t\) gives
		\eqref{eq:pmk-egf}.
	\end{proof}
	\begin{corollary}
		For \(m=0\),
		\begin{equation}
			P_{0,k}(z,a)
			=
			\alpha_k
			\frac{z(1+z)}{(1-z)^3}
			+
			\beta_k
			\frac{z}{(1-z)^2}
			+
			\frac{1}{1-z}.
			\label{eq:p0k}
		\end{equation}
	\end{corollary}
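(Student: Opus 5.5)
The plan is to specialize the exponential generating function of Theorem~\ref{thm:pmk-egf} at \(t=0\). Since \(P_{0,k}(z,a)\) is the constant term of that series in \(t\), evaluating both sides of \eqref{eq:pmk-egf} at \(t=0\) picks it out. The left-hand side reduces to \(P_{0,k}(z,a)\), because \(t^m/m!\) vanishes at \(t=0\) for \(m\geq1\) and equals \(1\) for \(m=0\). On the right-hand side, \(e^{at}\) becomes \(1\) and \(ze^t\) becomes \(z\). This produces exactly the three terms of \eqref{eq:p0k}. The hypothesis \(|z|e^{|t|}<1\) at \(t=0\) is just \(|z|<1\), which is the range where \(P_{m,k}\) was defined in \eqref{eq:pmk-definition}.

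As an independent check, and as a self-contained argument, I would also verify the identity directly from the definition. For \(m=0\) we have \((r+a)^0=1\), so
\[
P_{0,k}(z,a)=\sum_{r=0}^{\infty}(\alpha_k r^2+\beta_k r+1)z^r .
\]
I would then use the standard sums \(\sum_r z^r=1/(1-z)\), \(\sum_r rz^r=z/(1-z)^2\), and \(\sum_r r^2z^r=z(1+z)/(1-z)^3\). These follow by applying the Euler operator \(D\) once and twice to the geometric series, with termwise differentiation justified by absolute convergence for \(|z|<1\). Collecting the three sums with weights \(\alpha_k,\beta_k,1\) gives \eqref{eq:p0k}. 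Notably, the result is independent of \(a\), as expected.

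There is essentially no obstacle. The only point needing care is the computation \(D^2\bigl(1/(1-z)\bigr)=z(1+z)/(1-z)^3\). After that, I would note that the right-hand side is rational in \(z\) with its only pole at \(z=1\), which supplies the continuation to \(z\neq1\) claimed just before Theorem~\ref{thm:pmk-egf}.
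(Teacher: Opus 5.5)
Your proposal is correct and follows the paper's proof, which consists of setting \(t=0\) in Theorem~\ref{thm:pmk-egf}. Your direct computation, applying \(D\) to the geometric series to get \(\sum_r r z^r=z/(1-z)^2\) and \(\sum_r r^2 z^r=z(1+z)/(1-z)^3\), is also valid and makes the independence from \(a\) immediate.
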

	
	\begin{proof}
		Set \(t=0\) in Theorem~\ref{thm:pmk-egf}.
	\end{proof}

	\begin{theorem}\label{thm:pmk-relations}
		For \(m\in\mathbb{N}\),
		\begin{equation}
			\frac{\partial}{\partial a}P_{m,k}(z,a)
			=
			mP_{m-1,k}(z,a).
			\label{eq:pmk-a-derivative}
		\end{equation}
		
		For \(m\in\mathbb{N}_0\),
		\begin{equation}
			P_{m+1,k}(z,a)
			=
			(D+a)P_{m,k}(z,a),
			\label{eq:pmk-recursion}
		\end{equation}
		and
		\begin{equation}
			P_{m,k}(z,a)
			=
			(D+a)^mP_{0,k}(z,a).
			\label{eq:pmk-operator-power}
		\end{equation}
	\end{theorem}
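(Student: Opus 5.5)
We work first with \(|z|<1\). In this region the series \eqref{eq:pmk-definition} converges absolutely and locally uniformly, so termwise differentiation in \(a\) and in \(z\) is justified. The identities are then extended to all \(z\neq1\) by the rational continuation, since both sides are rational functions of \(z\) that agree on the open unit disc.

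\emph{The \(a\)-derivative.} For \eqref{eq:pmk-a-derivative}, note that \(A(r+1,k)=\alpha_kr^2+\beta_kr+1\) does not depend on \(a\). Hence
\[
\frac{\partial}{\partial a}\bigl[A(r+1,k)(r+a)^m z^r\bigr]=mA(r+1,k)(r+a)^{m-1}z^r .
\]
Summing over \(r\) gives \(mP_{m-1,k}(z,a)\). The differentiated series is again of the form \eqref{eq:pmk-definition} with \(m-1\) in place of \(m\), so it converges locally uniformly in \(a\) for fixed \(|z|<1\). This justifies the interchange.

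\emph{The \(D\)-recursion.} For \eqref{eq:pmk-recursion}, apply \(D=z\frac{d}{dz}\) termwise. This multiplies the \(r\)-th term by \(r\), giving
\[
DP_{m,k}(z,a)=\sum_{r=0}^{\infty}A(r+1,k)\,r\,(r+a)^m z^r .
\]
Adding \(aP_{m,k}(z,a)\) turns the factor \(r\) into \(r+a\), which produces \(\sum_{r=0}^{\infty}A(r+1,k)(r+a)^{m+1}z^r=P_{m+1,k}(z,a)\). This is the same device as the derivation of \eqref{eq:euler-first}, read in reverse.

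\emph{The operator power.} Formula \eqref{eq:pmk-operator-power} follows by induction on \(m\). The case \(m=0\) is trivial. The inductive step is \(P_{m+1,k}=(D+a)P_{m,k}=(D+a)(D+a)^mP_{0,k}\). Here \(D+a\) is a linear operator in \(z\) with \(a\) a constant, so its powers are unambiguous. Combining this with \eqref{eq:p0k} shows that each \(P_{m,k}\) is a rational function of \(z\) with only pole at \(z=1\), since \(D\) preserves this class. This is the continuation used above.

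The only step needing care is the analytic justification: uniform convergence of the differentiated series on compact subsets of \(|z|<1\), with \(a\) ranging over compact sets avoiding the nonpositive integers. For fixed \(m\ge0\), \((r+a)^m\) is a polynomial in \(r\) and \(a\), so the terms are bounded by a polynomial in \(r\) times \(|z|^r\). This settles the justification without difficulty.
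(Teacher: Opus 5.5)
Your proposal is correct and takes essentially the same approach as the paper. Both justify termwise differentiation on \(|z|<1\) by domination with a polynomial in \(r\) times \(\rho^r\), use \(r=(r+a)-a\) for the \(D\)-recursion, prove the operator power by induction, and extend to \(z\neq1\) by rational continuation. The only minor difference is that you obtain the rationality of \(P_{m,k}\) directly from \((D+a)^mP_{0,k}\) and \eqref{eq:p0k}, whereas the paper defers this step to the subsequent corollary.
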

	\begin{proof}
		By \eqref{eq:pmk-definition},
		\begin{equation*}
			P_{m,k}(z,a)=\sum_{r=0}^{\infty}A(r+1,k)(r+a)^m z^r, \ |z|<1.
		\end{equation*}
		
		Let \(K_z\) be a compact subset of \(\{z\in\mathbb{C}:|z|<1\},\) and let \(K_a\) be a compact subset of \(\mathbb{C}\). There exists \(\rho<1\) such that \(|z|\leq\rho\) for every \(z\in K_z\).
		Since \(A(r+1,k)\) is quadratic in \(r\), the defining series for \(P_{m,k}(z,a)\), as well as the series obtained after differentiation with respect to \(a\) or application of \(D\), is dominated on
		\(K_z\times K_a\) by a convergent series of the form \(C(1+r)^N\rho^r.\) Hence, these series converge uniformly on compact subsets, and the required operations may be performed term by term.
		
		Let \(m\in\mathbb{N}\). Differentiating with respect to \(a\), we obtain
		\begin{align*}
			\frac{\partial}{\partial a}P_{m,k}(z,a)
			&=\frac{\partial}{\partial a}\sum_{r=0}^{\infty}A(r+1,k)(r+a)^m z^r\\
			&=\sum_{r=0}^{\infty}A(r+1,k)\frac{\partial}{\partial a}(r+a)^m z^r\\
			&=m\sum_{r=0}^{\infty}A(r+1,k)(r+a)^{m-1}z^r\\
			&=mP_{m-1,k}(z,a).
		\end{align*}
		This proves \eqref{eq:pmk-a-derivative}. Next, let \(m\in\mathbb{N}_0\).  Applying the Euler operator \(D\) defined in \eqref{eq:EulerOperator} to \(P_{m,k}(z,a)\), we obtain
		\begin{align*}
			DP_{m,k}(z,a)
			&=
			z\frac{d}{dz}
			\sum_{r=0}^{\infty}
			A(r+1,k)(r+a)^m z^r\\
			&=
			\sum_{r=0}^{\infty}
			rA(r+1,k)(r+a)^m z^r\\
			&=\sum_{r=0}^{\infty}
			\bigl[(r+a)-a\bigr]
			A(r+1,k)(r+a)^m z^r\\
			&=\sum_{r=0}^{\infty}A(r+1,k)(r+a)^{m+1}z^r-a\sum_{r=0}^{\infty}A(r+1,k)(r+a)^m z^r\\
			&=P_{m+1,k}(z,a)-aP_{m,k}(z,a).
		\end{align*}
		Rearranging the terms proves \eqref{eq:pmk-recursion}. 
		
		It remains to prove \eqref{eq:pmk-operator-power}. We use induction on \(m\).  Let \(m=0\). Then \(P_{0,k}(z,a)
		=(D+a)^0P_{0,k}(z,a)\), so the formula holds.  Assume that 
		\begin{equation*}
			P_{m,k}(z,a)=(D+a)^mP_{0,k}(z,a)
		\end{equation*}
		for some \(m\in\mathbb{N}_0\).  By \eqref{eq:pmk-recursion},
		\begin{align*}
			P_{m+1,k}(z,a)
			&=
			(D+a)P_{m,k}(z,a)\\
			&=
			(D+a)(D+a)^mP_{0,k}(z,a)\\
			&=
			(D+a)^{m+1}P_{0,k}(z,a).
		\end{align*}
		Hence, the formula holds for \(m+1\). By the principle of mathematical induction, \eqref{eq:pmk-operator-power} holds for every	\(m\in\mathbb{N}_0\).
		
		Lastly, the identities are first obtained for \(|z|<1\). Since \(P_{m,k}(z,a)\) extends to a rational function of \(z\), they remain valid 	for all \(z\neq1\) by continuation.
	\end{proof}
	As an immediate consequence of Theorem \ref{thm:pmk-relations},
	\[
	P_{1,k}(z,a)
	=
	(D+a)P_{0,k}(z,a).
	\]
	
	\begin{corollary}
		For every \(m\in\mathbb{N}_0\), there exists a polynomial
		\(R_{m,k}(z,a)\) such that
		\begin{equation}
			P_{m,k}(z,a)
			=
			\frac{
				R_{m,k}(z,a)
			}{
				(1-z)^{m+3}
			}.
			\label{eq:pmk-rational}
		\end{equation}
	\end{corollary}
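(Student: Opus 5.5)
We argue by induction on \(m\), using the operator recursion \eqref{eq:pmk-recursion} from Theorem~\ref{thm:pmk-relations}. The claim is that each application of \(D+a\) raises the power of \(1-z\) in the denominator by exactly one while keeping the numerator polynomial.

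\emph{Base case \(m=0\).} Put the three terms of \eqref{eq:p0k} over the common denominator \((1-z)^3\):
\[
P_{0,k}(z,a)=\frac{\alpha_k z(1+z)+\beta_k z(1-z)+(1-z)^2}{(1-z)^3}.
\]
So \(R_{0,k}(z,a)=\alpha_k z(1+z)+\beta_k z(1-z)+(1-z)^2\), which is a polynomial in \(z\). It does not depend on \(a\), so it is trivially a polynomial in \(a\) as well.

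\emph{Inductive step.} Assume \(P_{m,k}=R_{m,k}/(1-z)^{m+3}\) with \(R_{m,k}\) a polynomial in \(z\) and \(a\). Differentiating with the quotient rule gives
\[
D\frac{R_{m,k}}{(1-z)^{m+3}}
=\frac{z\,\partial_z R_{m,k}}{(1-z)^{m+3}}+\frac{(m+3)z R_{m,k}}{(1-z)^{m+4}}.
\]
By \eqref{eq:pmk-recursion}, \(P_{m+1,k}=(D+a)P_{m,k}\). Combining the terms over \((1-z)^{m+4}\) yields
\[
R_{m+1,k}(z,a)=(1-z)\bigl(z\,\partial_z R_{m,k}+aR_{m,k}\bigr)+(m+3)zR_{m,k}.
\]
This is again a polynomial in \(z\) and \(a\), which completes the induction.

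\emph{Domain of validity.} Both \eqref{eq:pmk-recursion} and the formula for \(P_{0,k}\) are established for \(|z|<1\). The resulting representation is therefore valid there, and it gives the rational continuation to all \(z\neq1\) referred to after \eqref{eq:pmk-definition}.

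\emph{Where care is needed.} The argument contains no real obstacle. The only step to check is the bookkeeping of the exponent: \(D\) acting on \((1-z)^{-(m+3)}\) produces exactly one extra factor \((1-z)^{-1}\), not more. This is visible directly in the recursion for \(R_{m+1,k}\) above.

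\emph{Alternative route.} One could instead extract \(R_{m,k}\) from the exponential generating function \eqref{eq:pmk-egf}. The inductive argument is shorter and suffices.
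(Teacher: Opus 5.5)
Your proof is correct and follows the same route as the paper. Both arguments start from \(P_{0,k}\) having denominator \((1-z)^3\) and use \(P_{m+1,k}=(D+a)P_{m,k}\) (equivalently \((D+a)^mP_{0,k}\)) to show that each step raises the exponent of \(1-z\) by one. Your explicit recursion \(R_{m+1,k}=(1-z)(z\,\partial_z R_{m,k}+aR_{m,k})+(m+3)zR_{m,k}\) spells out the paper's remark that \(D\) raises the pole order at \(z=1\) by at most one.
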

	
	\begin{proof}
		The function \(P_{0,k}(z,a)\) has denominator at most
		\((1-z)^3\). By \eqref{eq:pmk-operator-power},
		\[
		P_{m,k}(z,a)
		=
		(D+a)^mP_{0,k}(z,a).
		\]
		Each application of \(\displaystyle D=z\frac{d}{dz}\)
		increases the possible order of the pole at \(z=1\) by at most one,
		while multiplication by \(a\) does not change the denominator.
		Therefore, \(P_{m,k}(z,a)\) has denominator at most
		\((1-z)^{m+3}\).
	\end{proof}
	
	For \(n\in\mathbb{N}_0\), let \(\mathcal{A}_n(z)\) denote the Eulerian
	polynomial, defined by
	\[
	\mathcal{A}_0(z)=1
	\]
	and
	\[
	\mathcal{A}_n(z)
	=
	\sum_{j=0}^{n-1}
	\left\langle {n\atop j}\right\rangle z^j,
	\
	n\geq1,
	\]
	where \(\displaystyle\left\langle {n\atop j}\right\rangle\)	is an Eulerian number. The negative-order polylogarithm satisfies
	\begin{equation}
		\operatorname{Li}_{-n}(z)
		=
		\frac{z\mathcal{A}_n(z)}{(1-z)^{n+1}},
		\
		n\in\mathbb{N}_0.
		\label{eq:negative-polylog-eulerian}
	\end{equation}
	This identity follows from the rational form of the negative-order
	polylogarithm \cite{NIST2026}. Since
	\[
	\Phi(z,-n,1)
	=
	\frac{\operatorname{Li}_{-n}(z)}{z},
	\]
	we obtain
	\begin{equation}
		\Phi(z,-n,1)
		=
		\frac{\mathcal{A}_n(z)}{(1-z)^{n+1}}.
		\label{eq:lerch-eulerian}
	\end{equation}
	
	\begin{theorem}
		\label{thm:hk-eulerian}
		For \(m\in\mathbb{N}_0\),
		\begin{align}
			H_k(z,-m,1)
			&=
			\alpha_k
			\frac{\mathcal{A}_{m+2}(z)}{(1-z)^{m+3}}
			+
			(\beta_k-2\alpha_k)
			\frac{\mathcal{A}_{m+1}(z)}{(1-z)^{m+2}}
			\notag\\
			&\quad+
			(1-\beta_k+\alpha_k)
			\frac{\mathcal{A}_m(z)}{(1-z)^{m+1}}.
			\label{eq:hk-eulerian}
		\end{align}
		The identity holds for \(|z|<1\) and extends as a rational identity for
		\(z\neq1\).
	\end{theorem}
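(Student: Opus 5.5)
The plan is to specialize the reduction formula of Theorem~\ref{thm:reduction} to \(a=1\) and \(s=-m\), and then substitute the Eulerian form \eqref{eq:lerch-eulerian} of the Hurwitz--Lerch values at negative integers. First I work for \(|z|<1\). There Theorem~\ref{thm:convergence} gives absolute convergence of every series involved for all \(s\), so Theorem~\ref{thm:reduction} holds pointwise without any continuation. Putting \(a=1\) and \(s=-m\) gives
\[
H_k(z,-m,1)=\alpha_k\Phi(z,-m-2,1)+(\beta_k-2\alpha_k)\Phi(z,-m-1,1)+(1-\beta_k+\alpha_k)\Phi(z,-m,1).
\]

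Second, I apply \eqref{eq:lerch-eulerian} with \(n=m+2\), \(n=m+1\) and \(n=m\), all of which lie in \(\mathbb{N}_0\). This replaces \(\Phi(z,-n,1)\) by \(\mathcal{A}_n(z)/(1-z)^{n+1}\) and yields \eqref{eq:hk-eulerian} for \(|z|<1\). One point needs checking: \eqref{eq:lerch-eulerian} came from \(\Phi(z,-n,1)=\operatorname{Li}_{-n}(z)/z\), which requires \(z\neq0\). At \(z=0\) both sides of \eqref{eq:lerch-eulerian} equal \(1\), because \(\Phi(0,-n,1)=1^{n}=1\) and \(\mathcal{A}_n(0)=1\). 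So the identity also holds on the whole unit disc, either by this direct check or by continuity.

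Third, I extend the result to \(z\neq1\). The right-hand side of \eqref{eq:hk-eulerian} is a rational function whose only possible pole is at \(z=1\). The left-hand side is \(P_{m,k}(z,1)\), which by \eqref{eq:pmk-rational} is also rational with its only pole at \(z=1\). Two rational functions that agree on the open disc agree identically, so the identity holds for all \(z\neq1\).

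I do not expect a real obstacle here. The only delicate points are the bookkeeping that the index shifts \(n=m+2, m+1, m\) produce the denominators \((1-z)^{m+3}, (1-z)^{m+2}, (1-z)^{m+1}\), and the justification of \eqref{eq:lerch-eulerian} at \(z=0\).
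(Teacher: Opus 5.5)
Your proposal is correct and follows the paper's proof: put \(a=1\), \(s=-m\) in the reduction formula \eqref{eq:hk-reduction-main}, then substitute \eqref{eq:lerch-eulerian} with \(n=m+2,m+1,m\). Your checks at \(z=0\) (where the polylogarithm relation needs \(z\neq0\)) and your rational-function uniqueness argument for extending to \(z\neq1\) are details the paper leaves implicit, and both are sound.
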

	
	\begin{proof}
		Set \(a=1\) and \(s=-m\) in
		\eqref{eq:hk-reduction-main}. This gives
		\begin{align*}
			H_k(z,-m,1)
			&=
			\alpha_k\Phi(z,-m-2,1)
			+
			(\beta_k-2\alpha_k)\Phi(z,-m-1,1)
			\\
			&\quad+
			(1-\beta_k+\alpha_k)\Phi(z,-m,1).
		\end{align*}
		Applying \eqref{eq:lerch-eulerian} to the three terms gives
		\eqref{eq:hk-eulerian}.
	\end{proof}
	
	\begin{corollary}
		For \(m\in\mathbb{N}_0\),
		\begin{equation}
			H_k(z,-m,1)
			=
			\frac{R_{m,k}(z,1)}{(1-z)^{m+3}},
			\label{eq:hk-eulerian-numerator}
		\end{equation}
		where
		\begin{align}
			R_{m,k}(z,1)
			&=
			\alpha_k\mathcal{A}_{m+2}(z)
			+
			(\beta_k-2\alpha_k)
			(1-z)\mathcal{A}_{m+1}(z)
			\notag\\
			&\quad+
			(1-\beta_k+\alpha_k)
			(1-z)^2\mathcal{A}_m(z).
			\label{eq:rmk-eulerian}
		\end{align}
	\end{corollary}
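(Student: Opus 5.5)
The plan is to put the three terms of Theorem~\ref{thm:hk-eulerian} over the common denominator \((1-z)^{m+3}\). Since \(\mathcal{A}_{m+2}(z)/(1-z)^{m+3}\) already has this denominator, it needs no change. We multiply the numerator and denominator of \(\mathcal{A}_{m+1}(z)/(1-z)^{m+2}\) by \((1-z)\), and those of \(\mathcal{A}_m(z)/(1-z)^{m+1}\) by \((1-z)^2\). Adding the three terms gives
\[
H_k(z,-m,1)=\frac{\alpha_k\mathcal{A}_{m+2}(z)+(\beta_k-2\alpha_k)(1-z)\mathcal{A}_{m+1}(z)+(1-\beta_k+\alpha_k)(1-z)^2\mathcal{A}_m(z)}{(1-z)^{m+3}}
\]
as an identity of rational functions, valid for \(|z|<1\) and hence for all \(z\neq1\).

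Next we must check that this numerator is the polynomial \(R_{m,k}(z,1)\) of \eqref{eq:pmk-rational}. By \eqref{eq:pmk-definition}, \(H_k(z,-m,1)=P_{m,k}(z,1)\), and the earlier corollary gives \(P_{m,k}(z,1)=R_{m,k}(z,1)/(1-z)^{m+3}\). Multiplying both representations by \((1-z)^{m+3}\), which is nonzero for \(z\neq1\), shows that the two polynomials agree on \(\{z\neq1\}\). Two polynomials agreeing on an infinite set are identical, so \eqref{eq:rmk-eulerian} holds.

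The only subtle point is that \(R_{m,k}\) was introduced by an existence statement, so it is not a priori unique. The uniqueness comes precisely from fixing the denominator to be exactly \((1-z)^{m+3}\): once the denominator is fixed, the numerator is determined as \((1-z)^{m+3}P_{m,k}(z,1)\). With that observed, the rest is routine bookkeeping.
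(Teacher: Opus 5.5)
Your proof is correct and matches the paper's, which simply writes the three terms of Theorem~\ref{thm:hk-eulerian} over the common denominator \((1-z)^{m+3}\). You also check that this numerator is the same polynomial \(R_{m,k}(z,1)\) as in \eqref{eq:pmk-rational}, using uniqueness once the denominator is fixed. That step is valid, and the paper leaves it implicit.
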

	
	\begin{proof}
		Write the three terms in \eqref{eq:hk-eulerian} over the common denominator
		\((1-z)^{m+3}\).
	\end{proof}
	
	The Euler polynomials \(E_n(a)\) are defined by
	\begin{equation}
		\frac{2e^{at}}{e^t+1}
		=
		\sum_{n=0}^{\infty}
		E_n(a)\frac{t^n}{n!},
		\
		|t|<\pi.
		\label{eq:euler-polynomial-generating-function}
	\end{equation}
	See \cite{NIST2026}.
	
	For \(\operatorname{Re}(s)>1\), separating the even and odd terms gives
	\begin{equation}
		\Phi(-1,s,a)
		=
		2^{-s}
		\left[
		\zeta\left(s,\frac{a}{2}\right)
		-
		\zeta\left(s,\frac{a+1}{2}\right)
		\right].
		\label{eq:alternating-lerch-hurwitz}
	\end{equation}
	Both sides extend by analytic continuation. Setting \(s=-n\) and using
	\eqref{eq:zeta-bernoulli}, we obtain
	\[
	\Phi(-1,-n,a)
	=
	\frac{2^n}{n+1}
	\left[
	B_{n+1}\left(\frac{a+1}{2}\right)
	-
	B_{n+1}\left(\frac{a}{2}\right)
	\right].
	\]
	The relation
	\[
	E_n(a)
	=
	\frac{2^{n+1}}{n+1}
	\left[
	B_{n+1}\left(\frac{a+1}{2}\right)
	-
	B_{n+1}\left(\frac{a}{2}\right)
	\right]
	\]
	therefore gives
	\begin{equation}
		\Phi(-1,-n,a)
		=
		\frac{1}{2}E_n(a),
		\qquad
		n\in\mathbb{N}_0.
		\label{eq:lerch-euler-polynomial}
	\end{equation}
	
	\begin{theorem}
		\label{thm:hk-euler-polynomial}
		Let \(m\in\mathbb{N}_0\). The continuation of \(H_k(-1,s,a)\) to
		\(s=-m\) satisfies
		\begin{align}
			H_k(-1,-m,a)
			&=
			\frac{1}{2}\alpha_kE_{m+2}(a)
			+
			\frac{1}{2}
			(\beta_k-2a\alpha_k)E_{m+1}(a)
			\notag\\
			&\quad+
			\frac{1}{2}
			(1-a\beta_k+a^2\alpha_k)E_m(a).
			\label{eq:hk-euler-polynomial}
		\end{align}
	\end{theorem}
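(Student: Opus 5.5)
The plan is to specialize the reduction formula of Theorem~\ref{thm:reduction} at \(z=-1\) and then substitute \eqref{eq:lerch-euler-polynomial} into each of the three terms. First we fix the continuation being used. For \(z=-1\) and \(\operatorname{Re}(s)>3\), the defining series of \(H_k(-1,s,a)\) and of \(\Phi(-1,s,a)\), \(\Phi(-1,s-1,a)\), \(\Phi(-1,s-2,a)\) all converge absolutely; this follows from Theorem~\ref{thm:convergence}. In that half-plane, \eqref{eq:hk-reduction-main} holds with \(z=-1\):
\[
H_k(-1,s,a)=\alpha_k\Phi(-1,s-2,a)+(\beta_k-2a\alpha_k)\Phi(-1,s-1,a)+(1-a\beta_k+a^2\alpha_k)\Phi(-1,s,a).
\]

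Next we continue in \(s\). By \eqref{eq:alternating-lerch-hurwitz}, \(\Phi(-1,s,a)\) is the difference of two Hurwitz zeta functions, each multiplied by \(2^{-s}\). The poles at \(s=1\) cancel, since both Hurwitz zeta functions have residue \(1\) there. Hence \(\Phi(-1,s,a)\) extends to an entire function of \(s\), and so do its shifts in \(s\). The right-hand side above is therefore entire in \(s\), and it is taken as the continuation of \(H_k(-1,s,a)\). This is the meaning of ``the continuation of \(H_k(-1,s,a)\) to \(s=-m\)'' in the statement, and by the identity theorem it is the unique such continuation.

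Then we set \(s=-m\) with \(m\in\mathbb{N}_0\). The three terms become \(\Phi(-1,-m-2,a)\), \(\Phi(-1,-m-1,a)\) and \(\Phi(-1,-m,a)\). Applying \eqref{eq:lerch-euler-polynomial} with \(n=m+2\), \(m+1\) and \(m\) turns these into \(\tfrac12E_{m+2}(a)\), \(\tfrac12E_{m+1}(a)\) and \(\tfrac12E_m(a)\). Substituting these values gives \eqref{eq:hk-euler-polynomial} directly.

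The only real obstacle is justifying the continuation step, since the convergence theorem does not cover \(\operatorname{Re}(s)\le 2\) at \(z=-1\). This is handled by the pole cancellation just described, which makes \(\Phi(-1,\cdot,a)\) entire. As an independent check, one can set \(z=-e^{t}\) in the generating-function identity of Theorem~\ref{thm:pmk-egf} and compare it with \eqref{eq:euler-polynomial-generating-function}, differentiated in \(t\). For the \(\alpha_k\) term this requires \(e^{at}\,D^2\!\left(\tfrac{1}{1-q}\right)\big|_{q=-e^t}\) to correspond to shifted Euler polynomials, which is consistent because \(D\) acts as \(\partial_t\) after the substitution \(q=-e^t\), for \(t\) near \(0\).
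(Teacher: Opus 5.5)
Your proof is correct and follows the paper's route: specialize the reduction formula \eqref{eq:hk-reduction-main} at $z=-1$, pass to the continuation in $s$, set $s=-m$, and apply \eqref{eq:lerch-euler-polynomial} to each of the three terms. Your justification of the continuation (the residues of $\zeta(s,a/2)$ and $\zeta(s,(a+1)/2)$ cancel at $s=1$, so $\Phi(-1,\cdot,a)$ is entire) makes explicit what the paper leaves to the phrase ``continued form''; the generating-function check is only heuristic, since Theorem~\ref{thm:pmk-egf} is stated for $|z|e^{|t|}<1$, and there you should write $ze^t=-e^t$ (i.e.\ $z=-1$) rather than $z=-e^t$.
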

	
	\begin{proof}
		Set \(z=-1\) and \(s=-m\) in the continued form of
		\eqref{eq:hk-reduction-main}. We obtain
		\begin{align*}
			H_k(-1,-m,a)
			&=
			\alpha_k\Phi(-1,-m-2,a)
			\\
			&\quad+
			(\beta_k-2a\alpha_k)
			\Phi(-1,-m-1,a)
			\\
			&\quad+
			(1-a\beta_k+a^2\alpha_k)
			\Phi(-1,-m,a).
		\end{align*}
		Applying \eqref{eq:lerch-euler-polynomial} to the three terms gives
		\eqref{eq:hk-euler-polynomial}.
	\end{proof}
	
	\begin{example}
		Taking \(m=0\) in Theorem~\ref{thm:hk-euler-polynomial} and using
		\[
		E_0(a)=1,
		\
		E_1(a)=a-\frac12,
		\ \text{and} \
		E_2(a)=a^2-a,
		\]
		we obtain
		\begin{align*}
			H_k(-1,0,a)
			&=
			\frac12
			\left[
			\alpha_kE_2(a)
			+
			(\beta_k-2a\alpha_k)E_1(a)
			+
			(1-a\beta_k+a^2\alpha_k)E_0(a)
			\right]
			\\
			&=
			\frac12-\frac{\beta_k}{4}
			=
			\frac12-\frac{3}{8}2^k.
		\end{align*}
		Thus, this continued value is independent of \(a\).
	\end{example}
	
	The special values obtained in this section connect the weighted
	Hurwitz--Lerch family with several classical polynomial sequences. The values
	at \(z=1\) are expressed through Bernoulli polynomials. For \(a=1\), the
	rational functions \(H_k(z,-m,1)\) have numerator polynomials given explicitly
	by Eulerian polynomials. The alternating values \(H_k(-1,-m,a)\) are expressed
	through Euler polynomials.
	
	\section{Conclusion}
	
	We introduced a Hurwitz--Lerch type family weighted by the
	\(k\)-augmented centered triangular numbers. Its dependence on \(k\) separates
	into the geometric factors \(1\), \(2\), and \(4\). This gives an inversion
	formula recovering \(\Phi(z,s,a)\), \(D\Phi(z,s,a)\), and \(D^2\Phi(z,s,a)\)	
	from three consecutive members of the family. The same Vandermonde argument
	applies to polynomial weights with coefficients depending on distinct
	geometric sequences.
	

\end{document}